\newcommand{\comment}[1]{}
\numberwithin{equation}{section}
\theoremstyle{plain}
\newtheorem{thm}{Theorem}[section]
\newtheorem*{thm*}{Theorem}
\newtheorem{cor}[thm]{Corollary}
\newtheorem{lem}[thm]{Lemma}
\newtheorem{prop}[thm]{Proposition}
\newtheorem*{conj}{Conjecture}
\theoremstyle{definition}
\newtheorem{rem}{Remark}
\def\N{\mathbb N}
\def\Z{\mathbb Z}
\def\eps{\varepsilon}
\def\bydef{\overset{\mathrm {def}}=}
\def\gl{\mathfrak{gl}}
\def\sl{\mathfrak{sl}}
\def\so{\mathfrak{so}}
\def\sp{\mathfrak{sp}}
\def\LR{\mathrm N}
\DeclareMathOperator{\End}{End} %
\def\im{\operatorname{im}}
\def\Tr{\operatorname{Tr}}
\def\soc{\operatorname{soc}}
\def\a{\alpha}
\def\1{{\mathbf 1}}
\def\0{{\mathbf 0}}
\def\o{\otimes}
\def\g{\mathfrak g}
\def\b{\mathfrak b}
\def\h{\mathfrak h}
\def\n{\mathfrak n}
\def\glV_#1{\boldsymbol\Gamma_{#1}}
\def\spV_#1{\boldsymbol\Gamma_{\<#1\>}}
\def\soV_#1{\boldsymbol\Gamma_{[#1]}}
\def\FF{{\mathbf F}}
\def\SS{\mathbb S}
\def\VV{\mathbf V}
\def\TT{\mathrm T}
\def\cplus{\hbox{$\subset${\raise0.3ex\hbox{\kern -0.55em ${\scriptscriptstyle +}$}}\ }}
\def\<{\langle}
\def\>{\rangle}
\def\({\left(}
\def\){\right)}
\def\[{\left[}
\def\]{\right]}
\begin{document}

\title[Tensor representations of classical locally finite Lie
algebras] {Tensor representations of classical \\ locally finite Lie
algebras }

\author{Ivan Penkov}
\author{Konstantin Styrkas}

\address{Jacobs University Bremen, Bremen 28759, Germany}

\maketitle


\begin{abstract}

The structure of tensor representations of the classical
finite-dimensional Lie algebras was described by H. Weyl. In this
paper we extend Weyl's results to the classical infinite-dimensional
locally finite Lie algebras $\gl_\infty$, $\sl_\infty$, $\sp_\infty$
and $\so_\infty$, and study important new features specific to the
infinite-dimensional setting.

Let $\g$ be one of the above locally finite Lie algebras and let $\VV$ be the
natural representation of $\g$. The tensor representations of $\g$
have the form $\VV^{\o d}$ for the cases $\g =
\sp_\infty,\so_\infty$, and the form $\VV^{\o p}\o \VV_*^{\o q}$ for
the cases $\g = \gl_\infty,\sl_\infty$, where $\VV_*$ is the
restricted dual of $\VV$. In contrast with the finite-dimensional
case, these tensor representations are not semisimple. We explicitly
describe their Jordan-H\"older constituents, socle filtrations, and
indecomposable direct summands.
\end{abstract}




\setcounter{section}{-1}

\section{Introduction}

One of H. Weyl's beautiful results is the description of the
structure of the space of tensors of given rank as a module over the
various classical groups of transformations of the underlying space. In this
paper we extend Weyl's constructions to the locally-finite
Lie algebras $\gl_\infty, \sl_\infty, \so_\infty$ and $\sp_\infty$,
and highlight some important differences and new features arising
in the infinite-dimensional case.

Each of the Lie algebras $\gl_\infty$, $\sl_\infty, \so_\infty$ and
$\sp_\infty$ can be defined as the union of the respective series of
finite-dimensional Lie algebras under the obvious inclusions. We refer to
$\gl_\infty$, $\sl_\infty, \so_\infty$ and $\sp_\infty$ as the classical infinite-dimensional locally finite Lie algebras.
The structure of these Lie algebras and their representations have been studied actively in recent years; an incomplete list of references for this field is
\cite{Ba,BB,BS,DP1,DP2,DPS,DPW,N,NS,Na,NP,O,PS,PZ}. Nevertheless,
the structure of the tensor representations of the classical infinite-dimensional locally finite Lie algebras has not yet been sufficiently explored.

Recall that for the simple classical Lie groups $GL_n, SO_n$ and
$Sp_{2n}$ every irreducible finite-dimensional module can be
realized inside the covariant tensor algebra of the natural
representation $V$. In particular, for the dual representation $V^*$
we have $V^* \cong V$ in the cases of $SO_n$ and $SP_{2n}$, and $V^*
\cong \bigwedge^{n-1}V$ in the case of $SL_n$. However, for the
reductive Lie algebra $GL_n$ not every finite-dimensional
irreducible representation occurs in the tensor algebra $\TT(V)$.
For example, the dual representation $V^*$ does not appear in $\TT(V)$.
Thus, in order to obtain all irreducible finite-dimensional
$GL_n$-modules, we must consider mixed tensors, i.e. the tensor
algebra $\TT(V \oplus V^*)$.

A similar phenomenon occurs for the simple Lie algebra $\sl_\infty$.
Let $\VV$ denote the natural representation of $\sl_\infty$. One can
show that the (restricted) dual representation $\VV_*$ does not
occur in the tensor algebra $\TT(\VV)$, and we study the space of
mixed tensors $\TT(\VV \oplus \VV_*)$. The main new feature of these
tensor representations is their failure to be completely reducible.
For $\so_\infty$ and $\sp_\infty$ the natural representation $\VV$
is self-dual, and we consider just the tensor algebra $\TT(\VV)$,
which also fails to be a completely reducible $\so_\infty$- or
$\sp_\infty$-module.

The main purpose of this paper is to describe the structure of the
tensor algebras $\TT(\VV\oplus \VV_*)$ and $\TT(\VV)$
as explicitly as possible. First, we decompose the tensor algebra
into indecomposable submodules of finite length. The answer is very
simple: for $\g = \gl_\infty, \sl_\infty$ the indecomposables are of
the form $\glV_{\lambda;0} \o \glV_{0;\mu}$, where
$\glV_{\lambda;0}$ and $\glV_{0;\mu}$ are the simple irreducible
$\sl_\infty$-submodules of the tensor algebras $\TT(\VV)$ and
$\TT(\VV_*)$ respectively, and for $\g = \so_\infty, \sp_\infty$ the
indecomposable modules are the $\gl_\infty$-modules
$\glV_{\lambda;0}$, considered as $\g$-modules.

We then explicitly describe the socle filtration of
$\glV_{\lambda;0} \o \glV_{0;\mu}$ as an $\sl_\infty$-module, and
the socle filtrations of $\glV_{\lambda;0}$ both as an
$\so_{\infty}$- and an $\sp_\infty$-module. In particular, we prove
that each indecomposable module has a simple socle.

In the last section of the paper we extend our description of tensor representations to a wider class of Lie algebras $\g = \mathfrak k \cplus \mathfrak m$, where the ideal $\mathfrak k$ is a classical simple locally-finite Lie algebra, and $\mathfrak m$ satisfies a certain technical condition. This class of Lie algebras includes, in particular, the root-reductive Lie algebras. The main result in this case states that the natural representation $\VV$ of $\mathfrak k$ can be equipped with a $\g$-module structure, and the socle filtration of any tensor representation of $\g$ coincides with its socle filtration as a $\mathfrak k$-module.

\begin{conj}
Each indecomposable direct summand of a tensor representations of
$\gl_\infty, \sl_\infty, \sp_\infty$ and $\so_\infty$ is rigid, i.e. its socle
filtration coincides with its radical filtration.
\end{conj}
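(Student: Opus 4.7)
The plan is to combine a contravariant self-duality on the category of tensor representations with an explicit construction of a candidate Loewy filtration via contractions, and then to match this against the socle filtration already described in this paper.

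We focus first on $\g=\sl_\infty$ with $M=\glV_{\lambda;0}\o\glV_{0;\mu}$; the $\gl_\infty$ case is identical, and the $\so_\infty,\sp_\infty$ cases with $M=\glV_{\lambda;0}$ are parallel, the Chevalley involution acting trivially on the self-dual $\VV$. Let $\omega$ denote the Chevalley involution of $\g$; twisting by $\omega$ interchanges $\VV$ and $\VV_*$, so $\glV_{\lambda;0}^\omega\cong\glV_{0;\lambda}$. Combining this twist with the restricted linear dual yields a contravariant involutive tensor self-equivalence $D(N)=(N_*)^\omega$ on the category of finite-length tensor modules; it fixes each simple $\glV_{\lambda';\mu'}$ up to isomorphism, and, being multiplicative, it satisfies $D(M)\cong M$. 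Consequently the head of $M$ is isomorphic to its simple socle $\glV_{\lambda;\mu}$, and the composition factors of the $i$-th socle layer of $M$ match those of the $i$-th layer (counted from the top) of its radical filtration.

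The duality alone only guarantees matching of composition factors, not equality of filtrations as submodules, so the next step is to produce a candidate Loewy filtration explicitly. Using the realizations $\glV_{\lambda;0}\subset\TT^{|\lambda|}(\VV)$ and $\glV_{0;\mu}\subset\TT^{|\mu|}(\VV_*)$ via Young symmetrizers, the invariant pairing $\VV\o\VV_*\to\C$ produces a family of iterated $\g$-equivariant contraction morphisms out of $M$. Let $F^k(M)$ denote the joint kernel of all $k$-fold contractions; then $F^0(M)=0$, $F^n(M)=M$ for $n$ the Loewy length predicted by the socle description of the paper, and by induction on $|\lambda|+|\mu|$ together with the rigidity of the smaller summands that arise as images of these contractions, one expects to identify $F^k(M)=\soc^k(M)$, with semisimple successive quotients matching the socle layers. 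For rigidity one needs in addition the equality $F^k(M)=\mathrm{rad}^{n-k}(M)$, equivalently $\soc^k(M)=\mathrm{rad}^{n-k}(M)$.

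The principal obstacle is precisely this second equality. Because the $\g$-invariant pairing $\VV\o\VV_*\to\C$ has no $\g$-equivariant inverse in the infinite-rank setting (the canonical element $\sum e_i\o e_i^*$ is an infinite sum lying outside $\VV\o\VV_*$), one cannot symmetrically build "co-contractions" dual to the contractions used above, and the duality $D$ does not transport $F^k(M)$ to $\mathrm{rad}^{n-k}(M)$ canonically; this failure of symmetry is in fact the very reason why $M$ fails to be semisimple in the first place. Closing the gap therefore requires a genuine extra ingredient: either a direct computation of $\Hom_\g(M,\glV_{\lambda';\mu'})$ via Schur--Weyl duality and the Littlewood--Richardson rule (modified by Littlewood's rule for $\so_\infty,\sp_\infty$), showing that every morphism to a composition factor factors through a prescribed depth of contraction, or else an abstract Koszul-type structural theorem for the category of tensor modules, in the spirit of recent work on infinite-rank classical Lie algebras, from which rigidity of the indecomposable summands would follow formally.
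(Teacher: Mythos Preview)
The paper does not prove this statement: it is explicitly labelled a \emph{Conjecture}, and the only evidence offered is the remark that the computations for tensor representations of rank $\le 4$ are consistent with it. There is therefore no paper proof to compare against.

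Your proposal is candid about this: you outline a strategy (contravariant self-duality plus the contraction filtration) and then correctly identify the gap yourself. The duality $D$ you describe does give $D(M)\cong M$ and hence forces the composition factors of $\overline{\soc}^{(i)}M$ to match those of $\mathrm{rad}^{n-i}M/\mathrm{rad}^{n-i+1}M$; but, as you note, this falls short of the equality $\soc^{(i)}M=\mathrm{rad}^{n-i}M$ of actual submodules. The absence of a $\g$-equivariant splitting of the pairing $\VV\otimes\VV_*\to\Bbbk$ is exactly the obstruction, and it is not a technicality one can finesse: it is the source of the non-semisimplicity. Your two suggested completions --- a direct $\Hom$ computation or a Koszul-type structural result --- are reasonable directions, but neither is carried out, so the proposal remains a sketch rather than a proof. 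In short, you have not closed a gap that the paper leaves open; you have accurately located it.
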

The examples computed in this paper imply that this conjecture holds
for tensor representations of rank $\le 4$.

\section{Preliminaries}

By $\N$ we denote the set of positive integers.
In this paper we work with a field $\Bbbk$ of characteristic zero.
All vector spaces, Lie algebras, and their modules are assumed to be defined over
$\Bbbk$. Tensor products are also over $\Bbbk$. All algebras and
modules are assumed to be at most countable dimensional over
$\Bbbk$. The sign $\cplus$ stands for semidirect sum of Lie algebras: in
$\mathfrak k \cplus \mathfrak m$ the Lie algebra $\mathfrak k$ is an ideal.

\medskip

Let $V$ be a finite length module over some algebra. The {\it socle}
of $V$, denoted $\soc V$, is the maximal semisimple submodule of $V$. Equivalently, $\soc V$ is the sum of all simple submodules of
$V$. The {\it socle filtration} of $V$ is defined inductively by
\begin{equation*}
    \soc^{(0)} V = 0,    \qquad\qquad
    \soc^{(i+1)} V/\soc^{(i)} V = \soc(V/\soc^{(i)} V), \quad i\ge 0.
\end{equation*}
The smallest positive integer $l$ such that $\soc^{(l)} V = V$ is
called the {\it Loewy length} of $V$. The semisimple modules
$\overline{\soc}^{(i)} V \overset{\rm def}{=} \soc^{(i)}
V/\soc^{(i-1)} V$ are called the {\it layers} of the socle
filtration.

\medskip

A \textit{partition} $\lambda$ is by definition a finite nonstrictly
decreasing sequence of positive integers:
\begin{equation*}
    \lambda = (\lambda_1 \ge \lambda_2 \ge \dots \ge \lambda_k).
\end{equation*}
For any partition $\lambda$ we set $|\lambda| \bydef \lambda_1 +
\dots + \lambda_k$, and use the convention that $\lambda_i=0$ if
$i>k$. The empty partition is denoted by $0$.

For any partition $\lambda$ we denote by $2\lambda$ the partition
determined by $(2\lambda)_i = 2\lambda_i$ for all $i$, and by
$\lambda^\top$ the partition determined by $(\lambda^\top)_i = \#
\{j \,|\, \lambda_j \ge i\}$ for all $i$. The Young diagram for
$\lambda^\top$ is obtained by transposing the Young diagram for
$\lambda$.

\medskip

Irreducible representations of the symmetric group $\mathfrak S_d$
are parameterized by partitions $\lambda$ satisfying $|\lambda| =
d$, and can be realized inside the regular representation
$\Bbbk[\mathfrak S_d]$. We denote
\begin{equation*}
    H_{\lambda} \ \bydef \ \Bbbk[\mathfrak S_d] \, c_\lambda,
\end{equation*}
where $c_\lambda \in \Bbbk[\mathfrak S_d]$ is the Young projector
corresponding to the standard Young tableau of shape $\lambda$ (see
e.g. \cite{FH} for details). The left regular action makes
$H_\lambda$ an irreducible $\mathfrak S_d$-module, and any
irreducible representation of $\mathfrak S_d$ can be obtained in
this way.

\medskip

For any vector space $V$, the symmetric group $\mathfrak S_d$ acts
in $V^{\o d}$ by permuting the tensor factors. For
any partition $\lambda$ with $|\lambda| = d$ we denote
\begin{equation*}
    \mathbb S_\lambda V = \im \biggr(c_\lambda: V^{\o d} \to V^{\o d} \biggr)
\end{equation*}
The correspondence $V \rightsquigarrow \mathbb S_\lambda V$ is
called the {\it Schur functor} corresponding to $\lambda$.

\medskip

The \textit{Littlewood-Richardson coefficients}
$\LR_{\lambda,\mu}^\nu$ are nonnegative integers, determined for any
partitions $\lambda,\mu,\nu$ by the relation $S_\lambda \, S_\mu =
\sum_\nu \LR_{\lambda,\mu}^\nu \, S_\nu$, where $S_\lambda$ denotes
the Schur symmetric polynomial corresponding to the partition
$\lambda$. In particular, $\LR_{\lambda,\mu}^\nu=0$ unless $|\nu| =
|\lambda|+|\mu|$.

\section{Tensor representations of $\gl_\infty$ and $\sl_\infty$}
\label{sec:gl}


Let $\VV,\VV_*$ be countable dimensional vector spaces, and let
$\<\cdot,\cdot\>: \VV \o \VV_* \to \Bbbk$ be a non-degenerate
pairing. The Lie algebra
$\gl_\infty$ is defined as the space $\VV \o \VV_*$, equipped with
the Lie bracket
\begin{equation}\label{eq:def gl Lie bracket}
    [u \o u^*, v \o v^*] =
    \<u^*,v\> \, u \o v^* - \<v^*,u\> \, v \o u^*,
    \qquad\qquad
    u,v \in \VV, \ u^*,v^* \in \VV_*.
\end{equation}
The kernel of the map $\<\cdot,\cdot\>$ is a Lie subalgebra of $\gl_\infty$,
which we denote $\sl_\infty$.


As observed by G. Mackey \cite{M}, there always exist dual bases
$\{\xi_i\}_{i \in \mathfrak I}$ of $\VV$ and
$\{\xi_i^*\}_{i\in\mathfrak I}$ of $\VV_*$, indexed by a countable
set $\mathfrak I$, so that we have $\<\xi_j^*, \xi_i\> =
\delta_{i,j}$ for $i,j \in \mathfrak I$. This gives another, more
straightforward coordinate definition of $\gl_\infty$ as the Lie
algebra with a linear basis $\{E_{i,j}= \xi_i \o \xi_j^*\}_{i,j\in
\mathfrak I}$ satisfying the usual commutation relations
$[E_{i,j},E_{k,l}] = \delta_{j,k} E_{i,l} - \delta_{i,l} E_{k,j}$.

We call $\VV$ the {\it natural representation} of $\gl_\infty$ and
$\sl_\infty$, and $\VV_*$ its {\it restricted dual}. For any
nonnegative integers $p,q$ we define the {\it tensor representation}
$\VV^{\o(p,q)}$ as the vector space $\VV^{\o p} \otimes \VV_*^{\o q}$, equipped with the following $\gl_\infty$-module structure:
\begin{multline*}\label{eq:tensor action gl}
    (u \o u^*) \cdot ( v_1 \o\dots\o v_p \o v^*_1 \o\dots\o v^*_q)
    \\
    = \sum_{i=1}^p \<u^*,v_i\> \, v_1 \o \dots \o v_{i-1} \o
    u \o v_{i+1} \o \dots\o v_p \o v^*_1 \o\dots \o v_q^*
    \\
    - \sum_{j=1}^q \<v_j^*,u\> \, v_1 \o \dots \o v_p \o v^*_1 \o\dots\o v_{j-1}^* \o u^* \o v_{j+1}\o \dots \o
    v_q^*
\end{multline*}
for $u,v_1,\dots,v_p \in V$ and $u^*,v_1^*,\dots,v_q^* \in \VV_*$. The product of symmetric groups $\mathfrak S_p \times \mathfrak S_q$
acts in $\VV^{\o(p,q)}$ by permuting the factors, and this action
commutes with the action of $\gl_\infty$. We express this by saying
that $\VV^{\o(p,q)}$ is a $(\gl_\infty,\mathfrak S_p \times
\mathfrak S_q)$-module, and use similar notation throughout the
paper.

\medskip

Our main goal in this section is to reveal the structure of the tensor
representations as a $\gl_\infty$-module, and in particular to
identify the Jordan-H\"older constituents of $\VV^{\o(p,q)}$. We describe these modules explicitly as appropriately defined highest weight $\gl_\infty$-modules. Consider the direct sum
decomposition $\gl_\infty = \h_\gl \oplus \( \bigoplus_{\a \in
\Delta_{\gl}} \, \Bbbk \, X_\a^{\gl} \)$, where
\begin{equation*}
    \h_\gl = \bigoplus_{i \in \mathfrak I} \ \Bbbk \, E_{i,i},
    \qquad\qquad
    \Delta_{\gl} = \left\{ \eps_i-\eps_j \, \biggr|\, i,j \in \mathfrak I,\ i \ne j
    \right\},
    \qquad\qquad
    X^{\gl}_{\eps_i-\eps_j} = E_{i,j}\ ,
\end{equation*}
and $\eps_i$ denotes the functional on $\h_\gl$ determined by
$\eps_i(E_{j,j}) = \delta_{i,j}$. We have $[H,X_\a^\gl] = \a(H)
X_\a^{\gl}$ for any  $\a\in \Delta_\gl$ and $H \in \h_\gl$. As
usual, we refer to elements of $\Delta_\gl$ as {\it roots}, and to
functionals on $\h_\gl$ as {\it weights}.

\begin{rem}
For an algebraically closed field $\Bbbk$ the general notion of a
Cartan subalgebra of $\gl_\infty$ has been defined and studied in
\cite{NP,DPS}.  The subalgebra $\h_\gl$ is an example of a {\it
splitting Cartan} subalgebra, and all splitting Cartan subalgebras
of $\gl_\infty$ are conjugated.
\end{rem}

From now on we identify the index set $\mathfrak I$ with
$\Z\setminus\{0\}$, and consider the polarization of the root system
$\Delta_\gl = \Delta_\gl^+ \coprod -\Delta_\gl^+$, where the set
$\Delta_\gl^+$ of {\it positive roots} is given by
\begin{equation*}
    \Delta_\gl^+ = \{ \eps_i-\eps_j \,|\, 0<i < j \} \,\bigcup \,
    \{ \eps_i-\eps_j \,|\, i < j < 0 \}
    \,\bigcup\, \{ \eps_i-\eps_j \,|\, j < 0 < i \}.
\end{equation*}
We denote $\n_\gl = \bigoplus_{\a\in\Delta_\gl^+} \g_\a$ and define $\b_\gl$ as the Lie subalgebra of $\gl_\infty$, generated by $\n_\gl$ and $\h_\gl$. It is clear that $\n_\gl$ is a Lie subalgebra of $\gl_\infty$,  that $\b_\gl = \n_\gl \cplus \h_\gl$, and that $[\b_\gl,\b_\gl] =
\n_\gl$.

Let $V$ be a $\gl_\infty$-module, and let $v\in V$. We say that $v$
is a {\it highest weight vector} if it generates a one-dimensional
$\b_\gl$-module. Any such $v$ must satisfy
\begin{equation*}
    \n_\gl \, v = 0,
    \qquad\qquad\quad
    H \, v = \chi(H)\, v
    \qquad
    \forall H \in \h_\gl,
\end{equation*}
for some $\chi \in \h_\gl^*$. We say that $V$ is a {\it highest weight
module} if $V$ is generated by a highest weight vector $v$ as above;
the functional $\omega$ is then called the {\it highest weight of $V$}. As
in the finite-dimensional case, it is easy to prove that for each
$\chi \in \h_\gl^*$ there exists a unique irreducible highest weight
$\gl_\infty$-module with highest weight $\omega$.

\begin{rem}
The subalgebra $\b_\gl$ is an example of a {\it splitting Borel}
subalgebra of $\gl_\infty$, see \cite{DP2} for a classification of
the splitting Borel subalgebras in $\gl_\infty$ over algebraically
closed fields. In this paper we only consider the notion of a highest
weight module, associated with $\b_\gl$. This choice has the important property
that all Jordan-H\"older constituents of all tensor representations
are highest weight modules.
\end{rem}

In particular, the natural representation $\VV$ is a highest weight
module with highest weight $\eps_1$, generated by the highest weight
vector $\xi_1$; similarly, $\VV_*$ is a highest weight vector of
weight $-\eps_{-1}$, generated by the highest weight vector $\xi_{-1}^*$.

\medskip

We now describe the Weyl construction for $\gl_\infty$. For any pair
of indices $I = (i,j)$ with $i\in\{1,2,\dots,p\}$ and
$j\in\{1,2,\dots,q\}$, define the contraction
\begin{equation*}
\begin{gathered}
    \Phi_{I}: \VV^{\o (p,q)} \to \VV^{\o (p-1,q-1)},\\
    v_1 \o\dots\o v_p \o v^*_1 \o\dots\o v^*_q \mapsto \<v_{j}^*,v_{i}\> \, v_1 \o \dots \o \hat
    v_{i} \o \dots\o v_p \o v^*_1 \o\dots\o \hat v_{j}^* \o \dots \o v_q^*,
\end{gathered}
\end{equation*}
and consider the $(\gl_\infty, \mathfrak S_p \times \mathfrak
S_q)$-submodule $\VV^{\{p,q\}}$ of $\VV^{\o (p,q)}$,
\begin{equation*}
    \VV^{\{p,q\}} \bydef \bigcap_{I} \ker
    \biggr( \Phi_{I}: \VV^{\o(p,q)} \to
    \VV^{\o(p-1,q-1)}\biggr).
\end{equation*}
Set also $\VV^{\{p,0\}} \bydef \VV^{\o p}$ and $\VV^{\{0,q\}}
\bydef \VV_*^{\o q}$. For any partitions $\lambda,\mu$ such that
$|\lambda|=p$ and $|\mu|=q$, define the $\gl_\infty$-submodule of
$\VV^{\o(p,q)}$
\begin{equation*}
    \glV_{\lambda;\mu} \bydef  \VV^{ \{p,q\} } \cap (\SS_\lambda \VV \o \SS_\mu \VV_*).
\end{equation*}

\medskip

\begin{thm}\label{thm:mixed Schur-Weyl infinite}
For any $p,q$ there is an isomorphism of $(\gl_\infty, \mathfrak S_p
\times \mathfrak S_q)$-modules
\begin{equation}\label{eq:mixed Schur-Weyl infinite}
    \VV^{\{p,q\}} \cong \bigoplus_{|\lambda|=p} \bigoplus_{|\mu| = q}
    \glV_{\lambda;\mu} \o (H_\lambda \o H_\mu).
\end{equation}
For any partitions $\lambda,\mu$, the $\gl_\infty$-module
$\glV_{\lambda;\mu}$ is an irreducible highest weight module with
highest weight $\chi \bydef \sum_{i \in \N} \lambda_i \, \eps_i - \sum_{i
\in \N} \mu_i \, \eps_{-i}$. Furthermore, $\glV_{\lambda;\mu}$ is
irreducible when regarded by restriction as an $\sl_\infty$-module.
\end{thm}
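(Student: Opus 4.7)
The plan is to prove in sequence the decomposition \eqref{eq:mixed Schur-Weyl infinite}, the highest weight description of $\glV_{\lambda;\mu}$, and its irreducibility over $\sl_\infty$. For the decomposition, I would apply the classical Schur--Weyl isotypic identity $V^{\o d} \cong \bigoplus_{|\nu|=d} \SS_\nu V \o H_\nu$ separately to $\VV^{\o p}$ and $\VV_*^{\o q}$; this produces
\begin{equation*}
    \VV^{\o(p,q)} \cong \bigoplus_{|\lambda|=p,\,|\mu|=q} (\SS_\lambda \VV \o \SS_\mu \VV_*) \o (H_\lambda \o H_\mu)
\end{equation*}
as $(\gl_\infty,\mathfrak S_p \times \mathfrak S_q)$-modules. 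The subspace $\VV^{\{p,q\}}$ is $\mathfrak S_p\times\mathfrak S_q$-stable because the family $\{\Phi_I\}$ of contractions is permuted by $\mathfrak S_p\times\mathfrak S_q$; hence $\VV^{\{p,q\}}$ is a sum of isotypic components, and its intersection with the $(\lambda,\mu)$-component is exactly $\glV_{\lambda;\mu}\o (H_\lambda\o H_\mu)$, giving \eqref{eq:mixed Schur-Weyl infinite}.

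For the highest weight, I would consider the vector
\begin{equation*}
    v_{\lambda,\mu} \bydef (c_\lambda \o c_\mu)\bigl(\xi_1^{\o\lambda_1}\o\xi_2^{\o\lambda_2}\o\cdots\o (\xi_{-1}^*)^{\o\mu_1}\o(\xi_{-2}^*)^{\o\mu_2}\o\cdots\bigr),
\end{equation*}
which lies in $\SS_\lambda \VV \o \SS_\mu \VV_*$. Standard finite-dimensional computations show that $v_{\lambda,\mu}$ is nonzero, has weight $\chi$, and is annihilated by $\n_\gl$. Since the $\VV$-factors involve only positive indices while the $\VV_*$-factors involve only negative ones, the pairing identity $\<\xi_j^*,\xi_i\>=\delta_{i,j}$ forces every contraction $\Phi_I$ to kill $v_{\lambda,\mu}$, so $v_{\lambda,\mu}\in\glV_{\lambda;\mu}$. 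Hence $\glV_{\lambda;\mu}$ contains a highest weight vector of the required weight $\chi$.

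To establish irreducibility, I would invoke the exhaustion $\gl_\infty=\bigcup_n \gl_n$ with $\gl_n$ the span of $\{E_{i,j}:|i|,|j|\le n\}$, along with the induced exhaustions of $\VV$ and $\VV_*$ by $\VV_n$ and $\VV_{*,n}$. Setting $\glV_{\lambda;\mu}^{(n)}\bydef \VV_n^{\{p,q\}}\cap(\SS_\lambda\VV_n\o\SS_\mu\VV_{*,n})$, the classical finite-dimensional Weyl construction asserts that for $n$ large enough, $\glV_{\lambda;\mu}^{(n)}$ is the irreducible $\gl_n$-submodule generated by $v_{\lambda,\mu}$. These nested subspaces exhaust $\glV_{\lambda;\mu}$, since every tensor involves only finitely many basis vectors. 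If $W\subset\glV_{\lambda;\mu}$ is a nonzero $\gl_\infty$-submodule, then $W$ meets some $\glV_{\lambda;\mu}^{(n)}$ nontrivially, and finite-dimensional irreducibility yields $\glV_{\lambda;\mu}^{(n)}\subset W$; inductively $\glV_{\lambda;\mu}^{(m)}\subset W$ for every $m\ge n$, forcing $W=\glV_{\lambda;\mu}$. The $\sl_\infty$-irreducibility follows identically, once one notes that every finite-dimensional irreducible $\gl_n$-module restricts irreducibly to $\sl_n$, because the central element $\Id_{\VV_n}$ acts by a scalar. The delicate point I anticipate is verifying that the Schur--Weyl isotypic decomposition is compatible with all the contractions $\Phi_I$, which I would handle by performing every decomposition inside $\VV_n^{\o(p,q)}$ and passing to the direct limit.
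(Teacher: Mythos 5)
Your proof is correct, but it organizes the argument somewhat differently from the paper. The paper derives the decomposition \eqref{eq:mixed Schur-Weyl infinite} by taking the direct limit of the finite-dimensional Weyl decompositions $(V_n)^{\{p,q\}}\cong\bigoplus\varGamma_{\lambda;\mu}^{(n)}\o(H_\lambda\o H_\mu)$ along a compatible exhaustion of $\gl_\infty$; you instead obtain it directly in the infinite-dimensional setting from the $\mathfrak S_p\times\mathfrak S_q$-isotypic decomposition of $\VV^{\o(p,q)}$ together with the observation that $\VV^{\{p,q\}}$ is a $\mathfrak S_p\times\mathfrak S_q$-submodule. Your route needs only the semisimplicity of $\Bbbk[\mathfrak S_p\times\mathfrak S_q]$ for the decomposition step, and confines the reliance on the finite-dimensional Weyl theorem to the irreducibility claim, which is a real economy. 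For the highest-weight assertion the paper argues that the one-dimensional highest-weight spaces $\varGamma_{\lambda;\mu}^{(n)}[\chi]$ stabilize as $n\to\infty$, while you exhibit the highest weight vector $v_{\lambda,\mu}$ explicitly and verify directly that $\n_\gl$ and all contractions $\Phi_I$ annihilate it, because all $\VV$-slots carry positive indices and all $\VV_*$-slots negative ones; this is a more concrete computation and has the added benefit of showing $\glV_{\lambda;\mu}\ne 0$ before irreducibility is addressed. For irreducibility over $\gl_\infty$ and $\sl_\infty$ your exhaustion argument matches the paper's. One phrasing to sharpen: ``$\VV^{\{p,q\}}$ is a sum of isotypic components'' should read that $\VV^{\{p,q\}}$ is the direct sum of its intersections with the isotypic components, each a multiple of $H_\lambda\o H_\mu$ whose multiplicity space is $(c_\lambda\o c_\mu)\VV^{\{p,q\}}=\VV^{\{p,q\}}\cap(\SS_\lambda\VV\o\SS_\mu\VV_*)=\glV_{\lambda;\mu}$; with that made precise, \eqref{eq:mixed Schur-Weyl infinite} follows.
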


\begin{proof}
Pick an enumeration $\mathfrak I = \{k_1,k_2,k_3,\dots\}$ of the
index set $\mathfrak I = \Z\setminus \{0\}$, such that $k_i = i$ for
$1\le i \le p$, and $k_{p+i} = -i$ for $1\le i\le q$. For each $n$
denote by $V_n$ the subspace of $\VV$ spanned by
$\xi_{k_1},\dots,\xi_{k_n}$, and by $V_n^*$ the subspace of
$\VV_*$ spanned by $\xi_{k_1}^*,\dots,\xi_{k_n}^*$; the pairing
between $\VV$ and $\VV_*$ restricts to a nondegenerate pairing
between $V_n$ and $V_n^*$.

Denote by $\g_n$ the Lie subalgebra of $\gl_\infty$ generated by
$\{E_{k_i,k_j}\}_{1\le i,j\le n}$, and set $\b_n \bydef \b_\gl \cap
\g_n$, $\h_n \bydef \h_\gl \cap \g_n$. It is clear that $\g_n \cong
\gl_n$, and that $\h_n$ (respectively, $\b_n$) is a Cartan (respectively, Borel) subalgebra of
$\g_n$. Moreover, the inclusion $\g_n \hookrightarrow \g_{n+1}$
restricts to inclusions $\h_n \hookrightarrow \h_{n+1}$ and $\b_n \hookrightarrow \b_{n+1}$.

The tensor representations $(V_n)^{\o(p,q)}$ and the contractions
$\Phi_I^{(n)}: (V_n)^{\o(p,q)} \to (V_n)^{\o(p-1,q-1)}$ are defined
by analogy with their infinite-dimensional counterparts. We set
\begin{equation*}
    (V_n)^{\{p,q\}} \bydef \bigcap_{I} \ker \Phi_{I}^{(n)},
    \qquad\qquad
    \varGamma_{\lambda;\mu}^{(n)} \bydef  (V_n)^{ \{p,q\} } \cap (\SS_\lambda V_n \o \SS_\mu V_n^*).
\end{equation*}

A version of the finite-dimensional Weyl construction (see Appendix
for more details) implies that for $n\ge p+q$ the $\gl_n$-module
$\varGamma_{\lambda;\mu}^{(n)}$ is an irreducible highest weight
module with highest weight $\chi$, regarded by restriction as a
functional on $\h_n$. Futhermore, we have $(\g_n, \mathfrak S_p \times
\mathfrak S_q)$-module isomorphisms
\begin{equation}\label{eq:mixed Schur-Weyl finite}
    (V_n)^{\{p,q\}} \cong \bigoplus_{|\lambda|=p} \bigoplus_{|\mu| = q}
    \varGamma_{\lambda;\mu}^{(n)} \o (H_\lambda \o H_\mu).
\end{equation}
It is clear from the definitions that we have the following diagram
of inclusions:
\begin{equation}\label{eq:exhaustions gl}
\begin{gathered}\xymatrix@C=0pt@R=0pt{
    (V_{p+q})^{\{p,q\}} & \subset & (V_{p+q+1})^{\{p,q\}} & \subset &\dots &\subset &(V_n)^{\{p,q\}}
    &\subset &\dots &\subset  &\bigcup_{n\in\N} (V_n)^{ \{p,q\} } & = & \VV^{\{p,q\}}
    \\
    \cup &&      \cup &&     &&     \cup &&     &&     \cup &&    \cup
    \\
    \varGamma_{\lambda;\mu}^{(p+q)} & \subset & \varGamma_{\lambda;\mu}^{(p+q+1)} &\subset &\dots &\subset &\varGamma_{\lambda;\mu}^{(n)}
    &\subset &\dots &\subset &\bigcup_{n\in\N} \varGamma_{\lambda;\mu}^{(n)} & = & \glV_{\lambda;\mu}
    }
\end{gathered}.
\end{equation}
The irreducibility of the $\gl_n$-module
$\varGamma_{\lambda;\mu}^{(n)}$ for each $n$ implies the irreducibility of the
$\gl_\infty$-module $\glV_{\lambda;\mu}$. Similarly, the isomorphism
\eqref{eq:mixed Schur-Weyl infinite} follows from the isomorphisms
\eqref{eq:mixed Schur-Weyl finite}, and it remains to show that
$\glV_{\lambda;\mu}$ is a highest weight $\gl_\infty$-module with
highest weight $\chi$.

For each $n\ge p+q$ the highest weight subspace
$\varGamma_{\lambda;\mu}^{(n)}[\chi]$ is one-dimensional, and
therefore
\begin{equation*}
    \varGamma_{\lambda;\mu}^{(p+q)}[\chi] = \varGamma_{\lambda;\mu}^{(p+q+1)}[\chi]=
    \dots = \varGamma_{\lambda;\mu}^{(n)}[\chi] = \dots = \bigcup_{n\in\N} \varGamma_{\lambda;\mu}^{(n)}[\chi] = \glV_{\lambda;\mu}[\chi].
\end{equation*}
We also have $\b_\gl \cdot \glV_{\lambda;\mu}[\chi] =
\bigcup_{n\in\N} \( \b_n \cdot \varGamma_{\lambda;\mu}^{(n)}[\chi]
\) = \bigcup_{n\in\N} \varGamma_{\lambda;\mu}^{(n)}[\chi] =
\glV_{\lambda;\mu}[\chi]$, which means that any nonzero $v
\in\glV_{\lambda;\mu}[\chi]$ is a highest weight vector generating
the $\gl_\infty$-module $\glV_{\lambda;\mu}[\chi]$. This completes
the proof of the theorem.
\end{proof}

\medskip

Applying Theorem \ref{thm:mixed Schur-Weyl infinite} to the special
cases $p=0$ and $q=0$, we obtain a version of Schur-Weyl duality for
$\gl_\infty$:
\begin{equation}\label{eq:Schur-Weyl infinite}
    \VV^{\o p} \cong \bigoplus_{|\lambda| = p}
    \glV_{\lambda;0} \o H_\lambda,
    \qquad\qquad
    \VV_*^{\o q} \cong \bigoplus_{|\mu| = q}
    \glV_{0;\mu} \o H_\mu.
\end{equation}


Next, we study the structure of the $\gl_\infty$-module
$\VV^{\o(p,q)}$. The main feature and the crucial difference from
the finite-dimensional case is that $\VV^{\o(p,q)}$ fails to be
completely reducible.

\begin{thm}\label{thm:socle series gl}
Let $p,q$ be nonnegative integers, and let $\ell = \min(p,q)$. Then
the Loewy length of the $\gl_\infty$-module $\VV^{\o (p,q)}$ equals
$\ell+1$, and
\begin{equation}\label{eq:socle series gl}
    \soc^{(r)} \VV^{\o(p,q)} = \bigcap_{I_1,\dots,I_r} \ker
    \biggr( \Phi_{I_1,\dots,I_r}: \VV^{\o(p,q)} \to
    \VV^{\o(p-r,q-r)}\biggr),
    \qquad
    r=1,\dots,\ell.
\end{equation}
Moreover, the socle filtration of $\VV^{\o (p,q)}$, regarded as an
$\sl_\infty$-module, coincides with \eqref{eq:socle series gl}.
\end{thm}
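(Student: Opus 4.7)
The plan is a double induction: an outer induction on $\ell = \min(p,q)$, with base case $\ell = 0$ where $\VV^{\o p}$ and $\VV_*^{\o q}$ are semisimple by \eqref{eq:Schur-Weyl infinite}, and an inner induction on $r$. Throughout, set $V = \VV^{\o(p,q)}$ and write $N_r$ for the right-hand side of \eqref{eq:socle series gl}. Each $N_r$ is a $\gl_\infty$-submodule because the contractions $\Phi_I$ are $\gl_\infty$-equivariant, and $N_{r-1} \subseteq N_r$ since any $r$-fold contraction factors as a 1-fold contraction composed with an $(r-1)$-fold one. The key forward step is that for $v \in N_r$, each $(r-1)$-fold contraction $\Phi_{I_1,\dots,I_{r-1}}(v)$ lies in $\VV^{\{p-r+1,q-r+1\}}$: any further 1-fold contraction of this image would constitute an $r$-fold contraction of $v$, which vanishes. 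The tuple of all $(r-1)$-fold contractions therefore defines an injection $N_r/N_{r-1} \hookrightarrow \bigoplus \VV^{\{p-r+1,q-r+1\}}$ into a semisimple module (Theorem \ref{thm:mixed Schur-Weyl infinite}), so each layer $N_r/N_{r-1}$ is semisimple and $N_r \subseteq \soc^{(r)}(V)$.

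For the reverse inclusion I grant inductively the theorem for all $\VV^{\o(p',q')}$ with $\min(p',q') < \ell$, and reduce $\soc^{(r)}(V) \subseteq N_r$ for $r \ge 2$ to the single \emph{base claim} that every simple submodule of $\VV^{\o(p',q')}$ lies in $\VV^{\{p',q'\}}$. Indeed, if $W/N_{r-1}$ is a simple submodule of $V/N_{r-1}$ and $\Phi_{I_0,\dots,I_{r-1}} = \Phi_{I_0}\circ\Phi_{I_1,\dots,I_{r-1}}$ is an $r$-fold contraction, then $\Phi_{I_1,\dots,I_{r-1}}$ kills $N_{r-1}$, so its restriction to $W$ factors through the simple quotient $W/N_{r-1}$; if nonzero, its image is a simple submodule of $\VV^{\o(p-r+1,q-r+1)}$ (whose $\min$ equals $\ell-r+1 < \ell$), hence by the outer hypothesis the image sits in $\VV^{\{p-r+1,q-r+1\}}$ and is killed by $\Phi_{I_0}$. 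Either way $\Phi_{I_0,\dots,I_{r-1}}(W)=0$, so $W \subseteq N_r$; this gives $\soc(V/N_{r-1}) \subseteq N_r/N_{r-1}$ and iteratively $\soc^{(r)}(V) = N_r$. The Loewy length equals $\ell+1$ because, on one hand, $\xi_1 \o \cdots \o \xi_p \o \xi_1^* \o \cdots \o \xi_q^*$ has nonzero diagonal $\ell$-fold contraction, so $N_\ell \ne V$, and on the other hand the $\ell$-fold contractions embed $V/N_\ell$ into $\bigoplus \VV^{\o(p-\ell,q-\ell)}$, which is semisimple by \eqref{eq:Schur-Weyl infinite} since $\min(p-\ell,q-\ell)=0$.

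The main obstacle is establishing the base claim at $\min(p,q) = \ell$ itself, namely that $V$ contains no highest weight vector of weight $\chi = \sum_i \lambda_i \eps_i - \sum_j \mu_j \eps_{-j}$ with $|\lambda|+|\mu|<p+q$. I plan to prove this by a direct combinatorial computation. Such a vector $v$ would be a finite linear combination of pure tensors of weight $\chi$, each possessing $k = (p+q-|\lambda|-|\mu|)/2 \ge 1$ \emph{cancellation indices} shared between the $\xi$- and $\xi^*$-factors. For any cancellation index $c$ appearing in the (finite) support of $v$ I can find a \emph{fresh} index $M$ outside this support such that one of $\eps_M - \eps_c$ or $\eps_c - \eps_M$ lies in $\Delta_\gl^+$; the highest weight condition $E_{M,c}v = 0$ (or $E_{c,M}v = 0$) then produces pure tensors in which $\xi_c$ or $\xi_c^*$ is replaced by $\xi_M$ or $\xi_M^*$. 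Because $M$ is fresh, these output tensors have no partners in $v$ to cancel against, forcing the coefficients that govern their appearance---each a linear function of the coefficients of pure tensors of $v$ containing $\xi_c$ or $\xi_c^*$---to vanish. Iterating over all cancellation indices forces every coefficient in $v$ to be zero, hence $v=0$. The bookkeeping of these cancellation relations, and verifying that together they genuinely exhaust the coefficients, is where I expect the real work to lie.

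The $\sl_\infty$-statement requires no additional argument: the contractions $\Phi_I$ are a fortiori $\sl_\infty$-equivariant, and each $\glV_{\lambda;\mu}$ is $\sl_\infty$-irreducible by Theorem \ref{thm:mixed Schur-Weyl infinite}, so $\VV^{\{p-r+1,q-r+1\}}$ remains semisimple as an $\sl_\infty$-module and both the forward and reverse arguments go through verbatim.
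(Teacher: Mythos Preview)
Your proof is correct and follows a genuinely different route from the paper's. For the forward inclusion $N_r \subseteq \soc^{(r)} V$, the paper passes known $\gl_n$-decompositions of $(V_n)^{\o(p,q)}$ to the limit, whereas your direct embedding $N_r/N_{r-1} \hookrightarrow \bigoplus \VV^{\{p-r+1,q-r+1\}}$ via the tuple of $(r{-}1)$-fold contractions bypasses the finite-dimensional scaffolding entirely. The real contrast is in the reverse inclusion: the paper introduces insertion operators $\Psi_I^{(n)}$ and auxiliary maps $\Xi_I^{(n)}$ and shows, via an asymptotic computation (Lemma~\ref{thm:asymptotics lemma} and Proposition~\ref{thm:proval} in the Appendix), that a simple constituent sitting too high in the filtration cannot project consistently onto the finite-dimensional pieces $F_n^{(s)}$. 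Your reduction to the single base claim $\soc\,\VV^{\o(p',q')} \subseteq \VV^{\{p',q'\}}$, settled by fresh-index combinatorics, is more elementary and self-contained --- and that step is cleaner than you fear: since a fresh $M$ marks a unique tensor slot, the assignment $(T,j)\mapsto T_{[j\to M]}$ is injective, so a single relation $E_{c,M}\,v=0$ (for $c>0$ and fresh $M>c$) already forces $c_T=0$ for every $T$ in which $\xi_c^*$ appears, and dually $E_{M,c}\,v=0$ for $c<0$ handles the $\xi_c$-side, with no further iteration required. The paper's approach buys uniformity: its asymptotic machinery carries over to the $\sp_\infty$ and $\so_\infty$ theorems with only notational change, whereas your fresh-index argument would need to be reworked against the different positive-root systems $\Delta_\sp^+$, $\Delta_\so^+$.
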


\begin{proof}
Denote by $\FF^{(r)}$ the subspaces of $\VV^{\o(p,q)}$ on the right
side of \eqref{eq:socle series gl}, and for each $n$ set
\begin{equation*}
    F_n^{(r)} = \bigcap_{I_1,\dots,I_r} \ker
    \biggr( \Phi_{ \{I_1,\dots,I_r\} }: (V_n)^{\o(p,q)} \to
    (V_n)^{\o(p-r,q-r)}\biggr),
    \qquad r=1,\dots,\ell,
\end{equation*}
where $\{V_n\}_{n \in \N}$ is the exhaustion of $\VV$ used in the
proof of Theorem \ref{thm:mixed Schur-Weyl infinite}. Then we have
the commutative diagram of $(\gl_\infty,\mathfrak S_p \times
\mathfrak S_q)$-modules
\begin{equation*}
\begin{gathered}
\xymatrix@C=5pt{
    \ar@{.>}[d] & &  \ar@{.>}[d] & & & & \ar@{.>}[d] & & \ar@{.>}[d]
    \\
    F_n^{(1)}  \ar@{->}[d] & \subset & F_n^{(2)}  \ar@{->}[d] & \subset & \dots & \subset &
    F_n^{(\ell)}  \ar@{->}[d] & \subset & (V_n)^{\o(p,q)} \ar[d]
    \\
    F_{n+1}^{(1)} \ar@{.>}[d] & \subset
    & F_{n+1}^{(2)} \ar@{.>}[d] & \subset & \dots & \subset & F_{n+1}^{(\ell)} \ar@{.>}[d]
    & \subset &     (V_{n+1})^{\o(p,q)} \ar@{.>}[d]
    \\
    \FF^{(1)}  & \subset
    & \FF^{(2)} & \subset & \dots & \subset & \FF^{(\ell)}  & \subset & \VV^{\o(p,q)}
    }
\end{gathered},
\end{equation*}
in which the vertical arrows are obtained as restrictions of the
inclusions in the rightmost column, and yield exhaustions of
$\FF^{(r)}$. Denote for convenience  $\FF^{(0)} = 0$,
$\FF^{(\ell+1)} = \VV^{\o(p,q)}$, and similarly $F_n^{(0)} = 0$,
$F_n^{(\ell+1)} = (V_n)^{\o(p,q)}$ for all $n$. It is easy to check
that the induced maps $F_n^{(r+1)}/ F_{n}^{(r)} \to F_{n+1}^{(r+1)}/
F_{n+1}^{(r)}$ are injective for all $n,r$, and therefore for each
$r=0,1,\dots,\ell$ the layer $\FF^{(r+1)}/\FF^{(r)}$ is the union of
quotients $F_n^{(r+1)}/ F_{n}^{(r)}$.

It is a standard exercise (see Appendix for more details) to show
that for each $r$ there exists a $(\gl_n,\mathfrak S_p \times
\mathfrak S_q)$-module isomorphism
\begin{equation}\label{eq:middle layer finite}
    F_n^{(r+1)}/ F_{n}^{(r)} \cong \bigoplus_{|\lambda|=p-r} \bigoplus_{|\mu|=q-r}
    \varGamma_{\lambda;\mu}^{(n)} \o H(\lambda,\mu;r),
\end{equation}
where $H(\lambda,\mu;r)$ are some $\mathfrak S_p \times \mathfrak
S_q$-modules. As in the proof of Theorem \ref{thm:mixed Schur-Weyl
infinite}, it follows that
\begin{equation*}
    \FF^{(r+1)}/\FF^{(r)} \cong \bigoplus_{|\lambda|=p-r} \bigoplus_{|\mu|=q-r}
    \glV_{\lambda;\mu} \o H(\lambda,\mu;r).
\end{equation*}
In particular, this shows that $\VV^{\o(p,q)}$ has a finite
Jordan-H\"older series with irreducible constituents of the form $\glV_{\lambda;\mu}$ for appropriate $\lambda,\mu$. Moreover,
$\FF^{(r)}$ is characterized as the unique submodule of
$\VV^{\o(p,q)}$ such that for any $\lambda,\mu$
\begin{equation*}
    [\FF^{(r)}:\glV_{\lambda;\mu}] =
    \begin{cases}
    [\VV^{\o(p,q)}:\glV_{\lambda;\mu}] & \text{ if }|\lambda|>p-r \text{ and }
    |\mu|>q-r,\\
    0 &\text{ otherwise }.
    \end{cases}
\end{equation*}

We use induction on $r$ to prove the main statement of the theorem:
$\FF^{(r)} = \soc^{(r)} \VV^{\o(p,q)}$. The base of induction $r=0$
is trivial since $\FF^{(0)} = \soc^{(0)} \VV^{\o(p,q)} = 0$.

Suppose that $\FF^{(r)} = \soc^{(r)} \VV^{\o(p,q)}$ for some $r$.
The quotient $\FF^{(r+1)}/\FF^{(r)}$ is a semisimple
$\gl_\infty$-module, hence $\FF^{(r+1)} \subset \soc^{(r+1)}
\VV^{\o(p,q)}$. Now let $\mathbf U$ be a simple submodule of
$\VV^{\o(p,q)}/\FF^{(r)}$; then $\mathbf U \cong \glV_{\lambda;\mu}$
with $\lambda,\mu$ satisfying $|\lambda| = p-s$ and $|\mu| = q - s$
for some $s\ge r$. In particular $\mathbf U \subset
\FF^{(s+1)}/\FF^{(r)}$. Our goal is to show that in fact $s=r$;
indeed, we would then conclude that $\soc^{(r+1)} \VV^{\o(p,q)}
\subset \FF^{(r+1)}$, proving the induction step.

Fix a vector $u \in \VV^{\o(p,q)}$ of weight $\chi = \sum_i
\lambda_i \eps_i - \sum_i \mu_i \eps_{-i}$, such that the image of
$u$ under the projection $\VV^{\o(p,q)} \to \VV^{\o(p,q)}/\FF^{(r)}$
is generates $\mathbf U$. Fix a large enough $m \in \N$ such that $u
\in (V_m)^{\o(p,q)}$. We may further assume without loss of
generality that $u$ generates a $\gl_m$-submodule of
$(V_m)^{(\o(p,q)}$ isomorphic to $\varGamma_{\lambda;\mu}^{(m)}$.

For each $n>m$ denote by $\pi_n: F^{(s+1)}_n \to
\varGamma_{\lambda;\mu}^{(n)} \o H(\lambda,\mu;s)$ the projection
corresponding to the decomposition \eqref{eq:middle layer finite}.
The map $\pi_n$ can be described explicitly, and we show in Proposition \ref{thm:proval} in the Appendix that  $u - \pi_n(u) \notin F^{(s-1)}_n$ for infinitely many $n$. On the other hand, $u$ generates a submodule of $ F^{(s+1)}_n/ F^{(r)}_n$ isomorphic to $\varGamma_{\lambda;\mu}^{(n)}$, which implies that $u-\pi_n(u)
\in F^{(r)}_n$ for all $n$. We conclude that $s\le r$, which
implies that in fact $s=r$. The induction is now complete, and thus
$\FF^{(r)} = \soc^{(r)} \VV^{\o(p,q)}$ for all $r$.

Finally, we need to verify that the filtration $\{\FF^{(r)}\}$ is
also the socle filtration of $\VV^{\o(p,q)}$ regarded as an
$\sl_\infty$-module. The irreducibility of $\glV_{\lambda;\mu}$ as an
$\sl_\infty$-module implies that the layers of the filtration
$\FF^{(r)}$ remain semisimple. Therefore $\FF^{(r)} \subset
\soc^{(r)} \VV^{\o(p,q)}$ for all $r$. The proof of the opposite
inclusion, given above for $\gl_\infty$, works without any
alterations for $\sl_\infty$ as well, and this completes the proof
of the theorem.
\end{proof}

\bigskip

Next, we describe the indecomposable constituents of the tensor
representations of $\gl_\infty$.

\begin{thm}\label{thm:indecomposables gl}
For any partitions $\lambda,\mu$ the $\gl_\infty$-module
$\glV_{\lambda; 0} \o \glV_{0;\mu}$ is indecomposable, and
\begin{equation}\label{eq:socle layers gl}
    \overline\soc^{(r+1)} (\glV_{\lambda; 0} \o \glV_{0;\mu}) \cong
    \bigoplus_{\lambda',\mu'}
    \( \sum_{|\gamma| = r} \LR_{\lambda',\gamma}^\lambda \LR_{\mu',\gamma}^\mu \) \
    \glV_{\lambda';\mu'}.
\end{equation}
The same applies to $\glV_{\lambda; 0} \o \glV_{0;\mu}$ regarded as an $\sl_\infty$-module.
\end{thm}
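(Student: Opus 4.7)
The strategy is to realize $\glV_{\lambda;0}\o\glV_{0;\mu}$ as a direct $\gl_\infty$-summand of $\VV^{\o(p,q)}$ with $p=|\lambda|$ and $q=|\mu|$, and then read the socle filtration off Theorem~\ref{thm:socle series gl}. Combining the two Schur--Weyl isomorphisms \eqref{eq:Schur-Weyl infinite} yields
\begin{equation*}
    \VV^{\o(p,q)}\cong\bigoplus_{|\lambda|=p,\,|\mu|=q}(\glV_{\lambda;0}\o\glV_{0;\mu})\o(H_\lambda\o H_\mu)
\end{equation*}
as $(\gl_\infty,\mathfrak S_p\times\mathfrak S_q)$-modules, so $(\glV_{\lambda;0}\o\glV_{0;\mu})\o(H_\lambda\o H_\mu)$ is the $(H_\lambda\o H_\mu)$-isotypic component of $\VV^{\o(p,q)}$. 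As such it is a $\gl_\infty$-direct summand, and its socle filtration is the restriction of that of $\VV^{\o(p,q)}$. Since the $\gl_\infty$-action on $H_\lambda\o H_\mu$ is trivial, the multiplicity of $\glV_{\lambda';\mu'}$ in $\overline\soc^{(r+1)}(\glV_{\lambda;0}\o\glV_{0;\mu})$ equals the multiplicity of the $\mathfrak S_p\times\mathfrak S_q$-representation $H_\lambda\o H_\mu$ inside the module $H(\lambda',\mu';r)$ appearing in \eqref{eq:middle layer finite}.

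The main step is therefore a combinatorial identification of $H(\lambda',\mu';r)$ as an $\mathfrak S_p\times\mathfrak S_q$-module. Working in a sufficiently large $\gl_n$, for which $(V_n)^{\o(p,q)}$ is semisimple, one exhibits the contractions $\Phi_{I_1,\dots,I_r}$ as arising from a choice of $r$ tensor slots on each side, with the two copies of $\mathfrak S_r$ that permute these slots identified diagonally. A direct calculation then shows
\begin{equation*}
    H(\lambda',\mu';r)\cong\bigoplus_{|\gamma|=r}\operatorname{Ind}_{\mathfrak S_{p-r}\times\mathfrak S_r}^{\mathfrak S_p}\!(H_{\lambda'}\o H_\gamma)\,\o\,\operatorname{Ind}_{\mathfrak S_{q-r}\times\mathfrak S_r}^{\mathfrak S_q}\!(H_{\mu'}\o H_\gamma),
\end{equation*}
and by Frobenius reciprocity together with the definition of Littlewood--Richardson coefficients, $H_\lambda$ occurs in the first induced factor with multiplicity $\LR_{\lambda',\gamma}^\lambda$ and $H_\mu$ in the second with multiplicity $\LR_{\mu',\gamma}^\mu$, yielding \eqref{eq:socle layers gl}. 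I expect this bookkeeping --- essentially Koike's decomposition of mixed $GL_n$-tensors --- to be the main obstacle; passage from $\gl_n$ to $\gl_\infty$ is handled by the exhaustion already set up in the proof of Theorem~\ref{thm:socle series gl}.

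Applying \eqref{eq:socle layers gl} with $r=0$ gives $\LR_{\lambda',0}^\lambda\LR_{\mu',0}^\mu=\delta_{\lambda',\lambda}\delta_{\mu',\mu}$, so $\soc(\glV_{\lambda;0}\o\glV_{0;\mu})\cong\glV_{\lambda;\mu}$ is simple, and hence $\glV_{\lambda;0}\o\glV_{0;\mu}$ is indecomposable. Because each $\glV_{\lambda';\mu'}$ remains simple as an $\sl_\infty$-module by Theorem~\ref{thm:mixed Schur-Weyl infinite}, the socles at every level coincide over the two algebras, and the socle filtration remains valid upon restriction to $\sl_\infty$.
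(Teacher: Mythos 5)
Your overall strategy — realize $\glV_{\lambda;0}\o\glV_{0;\mu}$ as the $H_\lambda\o H_\mu$-isotypic summand of $\VV^{\o(p,q)}$ and read the layers off Theorem~\ref{thm:socle series gl} — matches the paper, and your $\sl_\infty$ observation at the end is also the paper's. Where you diverge is in how you produce the multiplicity $\sum_{|\gamma|=r}\LR_{\lambda',\gamma}^\lambda\LR_{\mu',\gamma}^\mu$: the paper simply invokes the stable branching rule from \cite{HTW} for $[\varGamma_{\lambda;0}^{(n)}\o\varGamma_{0;\mu}^{(n)}:\varGamma_{\lambda';\mu'}^{(n)}]$ and then notes that, by the structure of the socle filtration, each constituent $\glV_{\lambda';\mu'}$ can only appear in the layer of depth $r=p-|\lambda'|=q-|\mu'|$, so total multiplicities automatically become layer multiplicities. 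You instead propose to determine the $\mathfrak S_p\times\mathfrak S_q$-module $H(\lambda',\mu';r)$ directly and extract the multiplicity of $H_\lambda\o H_\mu$ by Frobenius reciprocity — a more self-contained route that avoids citing \cite{HTW} but needs more work to pin down.

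That extra work is exactly what you flag as ``the main obstacle,'' and I'd encourage you to carry it out, because your claimed formula for $H(\lambda',\mu';r)$ is in fact correct and has a clean derivation. The layer $F_n^{(r+1)}/F_n^{(r)}$ is isomorphic to $(V_n)^{\{p,q\}}_r$, which is the induction of $(V_n)^{\{p-r,q-r\}}$ from the stabilizer $\Delta\mathfrak S_r\times\mathfrak S_{p-r}\times\mathfrak S_{q-r}$ of a fixed unordered collection of $r$ disjoint contraction pairs, with $\Delta\mathfrak S_r$ acting trivially on the inserted canonical elements. Using induction in stages and the identity $\operatorname{Ind}_{\Delta\mathfrak S_r}^{\mathfrak S_r\times\mathfrak S_r}\mathbf 1\cong\bigoplus_{|\gamma|=r}H_\gamma\o H_\gamma$, one gets
\begin{equation*}
    H(\lambda',\mu';r)\cong\bigoplus_{|\gamma|=r}\operatorname{Ind}_{\mathfrak S_r\times\mathfrak S_{p-r}}^{\mathfrak S_p}(H_\gamma\o H_{\lambda'})\o\operatorname{Ind}_{\mathfrak S_r\times\mathfrak S_{q-r}}^{\mathfrak S_q}(H_\gamma\o H_{\mu'}),
\end{equation*}
precisely your formula (after using commutativity of the LR coefficients in the two lower indices), and Frobenius reciprocity then yields the stated multiplicities. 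So your route is sound and arguably more elementary; the paper's citation to \cite{HTW} is shorter but less transparent. You should, however, actually carry out the identification rather than defer it, since it is the crux of your argument.
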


\begin{proof}
Schur-Weyl duality \eqref{eq:Schur-Weyl infinite} for $\gl_\infty$
implies that
\begin{equation*}
    \VV^{\o(p,q)} \cong \bigoplus_{|\lambda|=p} \bigoplus_{|\mu| = q}
    (\glV_{\lambda;0} \o \glV_{0;\mu} )\o (H_\lambda \o H_\mu).
\end{equation*}
Hence the $\gl_\infty$-module $\glV_{\lambda; 0} \o \glV_{0;\mu}$ is
realized as the direct summand  $(\SS_\lambda \VV \o \SS_\mu \VV_*)$
of the $\gl_\infty$-module $\VV^{\o (p,q)}$. Therefore
\begin{equation*}
    \soc^{(r)} \glV_{\lambda;\mu} = \glV_{\lambda;\mu} \cap \soc^{(r)} \VV^{\o d}.
\end{equation*}
It is known that for any partitions $\lambda,\mu,\lambda',\mu'$ we
have $[\varGamma_{\lambda; 0}^{(n)} \o \varGamma_{0;\mu}^{(n)} :
\varGamma_{\lambda';\mu'}^{(n)}] = \sum_{\gamma}
\LR_{\lambda',\gamma}^\lambda \LR_{\mu',\gamma}^\mu$ provided $n$
is large enough, see e.g. \cite{HTW}. This yields
\begin{equation}\label{eq:multies}
    [\glV_{\lambda; 0} \o \glV_{0;\mu} : \glV_{\lambda';\mu'}] =
    \sum_{\gamma} \LR_{\lambda',\gamma}^\lambda \LR_{\mu',\gamma}^\mu,
\end{equation}
and combining \eqref{eq:multies} with the description of the socle filtration of
$\VV^{\o(p,q)}$, we obtain \eqref{eq:socle layers gl}. In
particular, $\soc (\glV_{\lambda; 0} \o \glV_{0;\mu}) \cong
\glV_{\lambda;\mu}$, and the simplicity of the socle implies the
indecomposability of the $\gl_\infty$-module $\glV_{\lambda; 0} \o
\glV_{0;\mu}$.

\end{proof}

\begin{cor}
The decomposition of $\VV^{\o (p,q)}$ into indecomposable
$\gl_\infty$-modules is given by the isomorphism
\begin{equation*}
    \VV^{\o (p,q)} \cong \bigoplus_{|\lambda| = p} \bigoplus_{|\mu| = q}
    \( \dim H_\lambda \, \dim H_\mu \) \ \glV_{\lambda;0} \o \glV_{0;\mu}.
\end{equation*}
\end{cor}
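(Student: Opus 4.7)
The plan is to deduce the corollary directly from Schur--Weyl duality together with Theorem \ref{thm:indecomposables gl}. Since by definition $\VV^{\o(p,q)} = \VV^{\o p} \o \VV_*^{\o q}$, I would substitute the two Schur--Weyl isomorphisms \eqref{eq:Schur-Weyl infinite} into the respective tensor factors and reorganize the factors to obtain
\begin{equation*}
    \VV^{\o(p,q)} \cong \Bigl( \bigoplus_{|\lambda|=p} \glV_{\lambda;0} \o H_\lambda \Bigr) \o \Bigl( \bigoplus_{|\mu|=q} \glV_{0;\mu} \o H_\mu \Bigr) \cong \bigoplus_{|\lambda|=p,\, |\mu|=q} (\glV_{\lambda;0} \o \glV_{0;\mu}) \o (H_\lambda \o H_\mu),
\end{equation*}
where the finite-dimensional vector space $H_\lambda \o H_\mu$ is treated as a trivial multiplicity space for $\gl_\infty$. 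Each summand $\glV_{\lambda;0} \o \glV_{0;\mu}$ then appears with multiplicity $\dim H_\lambda \cdot \dim H_\mu$, which is precisely the claimed formula.

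To conclude that this is a decomposition into \emph{indecomposable} $\gl_\infty$-submodules, I would invoke Theorem \ref{thm:indecomposables gl}, which already establishes the indecomposability of each $\glV_{\lambda;0} \o \glV_{0;\mu}$. The only point worth spelling out is that the Schur--Weyl isomorphisms in \eqref{eq:Schur-Weyl infinite} are isomorphisms of $(\gl_\infty,\mathfrak S_p)$- and $(\gl_\infty,\mathfrak S_q)$-modules respectively, and tensoring such isomorphisms preserves the $\gl_\infty$-module structure because $\gl_\infty$ acts diagonally on the tensor product while the two symmetric groups act on disjoint sets of tensor slots. There is no real obstacle: the content of the corollary is pure bookkeeping on top of two results already in hand, which is precisely why it is formulated as a corollary rather than an independent theorem.
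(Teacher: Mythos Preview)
Your proposal is correct and matches the paper's approach exactly: the decomposition $\VV^{\o(p,q)} \cong \bigoplus_{|\lambda|=p}\bigoplus_{|\mu|=q} (\glV_{\lambda;0}\o\glV_{0;\mu})\o(H_\lambda\o H_\mu)$ is already derived from \eqref{eq:Schur-Weyl infinite} in the proof of Theorem~\ref{thm:indecomposables gl}, and the corollary is then immediate from the indecomposability established there.
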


\noindent {\bf Examples.} We begin with the description of tensors of rank 2.
Purely covariant and purely contravariant tensor representations of $\gl_\infty$ and $\sl_\infty$ are
completely reducible, and are decomposed by
the Schur-Weyl duality \eqref{eq:mixed Schur-Weyl
infinite}. For tensors of rank 2 these decompositions correspond to the
isomorphisms
\begin{equation*}
    \VV \o \VV \cong S^2 \VV \oplus \Lambda^2 \VV,
    \qquad\qquad
    \VV_* \o \VV_* \cong S^2 \VV_* \oplus \Lambda^2 \VV_*.
\end{equation*}
The tensor representation of mixed type $\VV \o \VV_*$ is the
adjoint representation of $\gl_\infty$, for which we have the
non-splitting short exact sequence of $\gl_\infty$-modules
\begin{equation*}
    0 \to \sl_\infty \to \gl_\infty \to \Bbbk \to 0.
\end{equation*}
In other words, the socle filtration of the adjoint
$\gl_\infty$-module has length 2, with a simple socle isomorphic to
$\sl_\infty$, and a simple top isomorphic to $\Bbbk$. We summarize
the structure of these modules graphically by
\begin{align*}
    \VV^{\o(2,0)}  &\sim
    \begin{tabular}{|c|}
        \hline
        $\glV_{(2);(0)}$\\
        \hline
    \end{tabular}
    \ \oplus \
    \begin{tabular}{|c|}
        \hline
        $\glV_{(1,1);(0)}$\\
        \hline
    \end{tabular}\ ,
    \\
    \VV^{(1,1)}  &\sim
    \begin{tabular}{|c|}
        \hline
        $\glV_{(0);(0)}$\\
        \hline
        $\glV_{(1);(1)}$\\
        \hline
    \end{tabular}\ ,
    \\
    \VV^{\o(0,2)}&\sim
    \begin{tabular}{|c|}
        \hline
        $\glV_{(0);(2)}$\\
        \hline
    \end{tabular}
    \ \oplus \
    \begin{tabular}{|c|}
        \hline
        $\glV_{(0);(1,1)}$\\
        \hline
    \end{tabular}\ .
\end{align*}
In these diagrams each tower represents an indecomposable direct summand of a module, and
the vertical arrangement of boxes in each tower represents the
structure of the layers of the socle filtration, with the bottom box
corresponding to the socle.

\medskip

\noindent Similarly, Theorem \ref{thm:indecomposables gl} yields the
following diagrams for tensor representations of rank 3:
\begin{align*}
    \VV^{\o(3,0)} &\sim
    \begin{tabular}{|c|}
        \hline
        $\glV_{(3);(0)}$\\
        \hline
    \end{tabular}
    \ \oplus 2\
    \begin{tabular}{|c|}
        \hline
        $\glV_{(2,1);(0)}$\\
        \hline
    \end{tabular}
    \ \oplus \
    \begin{tabular}{|c|}
        \hline
        $\glV_{(1,1,1);(0)}$\\
        \hline
    \end{tabular}\ ,
\\
    \VV^{\o(2,1)} & \sim
    \begin{tabular}{|c|}
        \hline
        $\glV_{(1);(0)}$\\
        \hline
        $\glV_{(2);(1)}$\\
        \hline
    \end{tabular}
    \ \oplus \
    \begin{tabular}{|c|}
        \hline
        $\glV_{(1);(0)}$\\
        \hline
        $\glV_{(1,1);(1)}$\\
        \hline
    \end{tabular}\ ,
\\
    \VV^{\o(0,3)} &\sim
    \begin{tabular}{|c|}
        \hline
        $\glV_{(0);(3)}$\\
        \hline
    \end{tabular}
    \ \oplus 2\
    \begin{tabular}{|c|}
        \hline
        $\glV_{(0);(2,1)}$\\
        \hline
    \end{tabular}
    \ \oplus \
    \begin{tabular}{|c|}
        \hline
        $\glV_{(0);(1,1,1)}$\\
        \hline
    \end{tabular} \ ,
\end{align*}
and for tensor representations of rank 4:
\begin{align*}
    \VV^{\o(4,0)} &\sim
    \begin{tabular}{|c|}
        \hline
        $\glV_{(4);(0)}$\\
        \hline
    \end{tabular}
    \ \oplus 3\
    \begin{tabular}{|c|}
        \hline
        $\glV_{(3,1);(0)}$\\
        \hline
    \end{tabular}
    \ \oplus 2\
    \begin{tabular}{|c|}
        \hline
        $\glV_{(2,2);(0)}$\\
        \hline
    \end{tabular}
    \ \oplus 3\
    \begin{tabular}{|c|}
        \hline
        $\glV_{(2,1,1);(0)}$\\
        \hline
    \end{tabular}
    \ \oplus \
    \begin{tabular}{|c|}
        \hline
        $\glV_{(1,1,1,1);(0)}$\\
        \hline
    \end{tabular}\ ,
\\
    \VV^{\o(3,1)} & \sim
    \begin{tabular}{|c|}
        \hline
        $\glV_{(2);(0)}$\\
        \hline
        $\glV_{(3);(1)}$\\
        \hline
    \end{tabular}
    \ \oplus 2 \
    \begin{tabular}{|c|}
        \hline
        $\glV_{(2);(0)} \oplus \glV_{(1,1);(0)}$\\
        \hline
        $\glV_{(2,1);(1)}$\\
        \hline
    \end{tabular}
    \ \oplus \
    \begin{tabular}{|c|}
        \hline
        $\glV_{(1,1);(0)}$\\
        \hline
        $\glV_{(1,1,1);(1)}$\\
        \hline
    \end{tabular}\ ,
\\
    \VV^{\o(2,2)} &\sim
    \begin{tabular}{|c|}
        \hline
        $\glV_{(0);(0)}$\\
        \hline
        $\glV_{(1);(1)}$\\
        \hline
        $\glV_{(2);(2)}$\\
        \hline
    \end{tabular}
    \ \oplus \
    \begin{tabular}{|c|}
        \multicolumn{1}{c}{ }\\
        \hline
        $\glV_{(1);(1)}$\\
        \hline
        $\glV_{(2);(1,1)}$\\
        \hline
    \end{tabular}
    \ \oplus \
    \begin{tabular}{|c|}
        \multicolumn{1}{c}{ }\\
        \hline
        $\glV_{(1);(1)}$\\
        \hline
        $\glV_{(1,1);(2)}$\\
        \hline
    \end{tabular}
    \ \oplus \
    \begin{tabular}{|c|}
        \hline
        $\glV_{(0);(0)}$\\
        \hline
        $\glV_{(1);(1)}$\\
        \hline
        $\glV_{(1,1);(1,1)}$\\
        \hline
    \end{tabular}\ ,
\\
    \VV^{\o(1,3)} & \sim
    \begin{tabular}{|c|}
        \hline
        $\glV_{(0);(2)}$\\
        \hline
        $\glV_{(1);(3)}$\\
        \hline
    \end{tabular}
    \ \oplus 2 \
    \begin{tabular}{|c|}
        \hline
        $\glV_{(0);(2)} \oplus \glV_{(0);(1,1)}$\\
        \hline
        $\glV_{(1);(2,1)}$\\
        \hline
    \end{tabular}
    \ \oplus \
    \begin{tabular}{|c|}
        \hline
        $\glV_{(0);(1,1)}$\\
        \hline
        $\glV_{(1);(1,1,1)}$\\
        \hline
    \end{tabular}\ ,
\\
    \VV^{\o(0,4)} &\sim
    \begin{tabular}{|c|}
        \hline
        $\glV_{(0);(4)}$\\
        \hline
    \end{tabular}
    \ \oplus 3\
    \begin{tabular}{|c|}
        \hline
        $\glV_{(0);(3,1)}$\\
        \hline
    \end{tabular}
    \ \oplus 2\
    \begin{tabular}{|c|}
        \hline
        $\glV_{(0);(2,2)}$\\
        \hline
    \end{tabular}
    \ \oplus 3\
    \begin{tabular}{|c|}
        \hline
        $\glV_{(0);(2,1,1)}$\\
        \hline
    \end{tabular}
    \ \oplus \
    \begin{tabular}{|c|}
        \hline
        $\glV_{(0);(1,1,1,1)}$\\
        \hline
    \end{tabular}\ .
\end{align*}

\section{Tensor representations of $\sp_\infty$}
\label{sec:sp}


Let $\VV$ be a countable dimensional vector space, and let $\Omega:
\VV \o \VV \to \Bbbk$ be a non-degenerate anti-symmetric bilinear
form on $\VV$. We realize the Lie algebra $\gl_\infty$ as in Section
\ref{sec:gl} by taking $\VV_* = \VV$ and $\Omega$ as the pairing $\<\cdot,\cdot\>$.
The Lie algebra $\sp_\infty$ is defined as the Lie subalgebra of
$\gl_\infty$, which preserves the form $\Omega$, i.e.
\begin{equation*}
    \sp_\infty = \left\{ X \in \gl_\infty \,\biggr|\, \Omega(X u,v) + \Omega(u,X v) = 0 \text{ for all } u,v \in \VV
    \right\}.
\end{equation*}
It is always possible to pick a basis $\{\xi_i\}$ of $\VV$, indexed
as before by the set $\mathfrak I = \Z\setminus\{0\}$, such that
$\Omega(\xi_i,\xi_j) = \operatorname{sign}(i) \, \delta_{i+j,0}$. In
the coordinate realization of $\gl_\infty$, the Lie algebra
$\sp_\infty$ has a linear basis $\{\operatorname{sign}(j) E_{i,j} -
\operatorname{sign}(i) E_{-j,-i}\}_{i,j\in \mathfrak I}$. Since the
dual basis $\{\xi_i^*\}_{i \in \mathfrak I}$ is given by $\xi_i^* =
\operatorname{sign}(i) \, \xi_{-i}$, it follows that $\sp_\infty =
S^2 \VV$, and the Lie bracket is induced by
\eqref{eq:def gl Lie bracket}.

We call $\VV$, regarded as an $\sp_\infty$-module by restriction,
the {\it natural representation} of $\sp_\infty$. It is easy to see that the $\sp_\infty$-action on the  $\gl_\infty$-module $\VV^{\o(p,q)}$ coincides with the $\sp_\infty$-action on the $\gl_\infty$-module $\VV^{\o(p+q)}$. Therefore it suffices to study the tensor representations $\VV^{\o d}$.

To define the notion of a highest weight $\sp_\infty$-module, we
consider the direct sum decomposition $\sp_\infty = \h_\sp
\oplus \(\bigoplus_{\a \in \Delta_{\sp}} \, \Bbbk \, X_\a^{\sp}\)$,
where
\begin{gather*}
    \h_\sp = \bigoplus_{i \in \mathfrak I} \ \Bbbk \, (E_{i,i} - E_{-i,-i}),
    \qquad
    \Delta_\sp = \{ \pm(\eps_i+\eps_j) \ | \ i,j \in \N  \} \bigcup
    \{ \eps_i-\eps_j \ | \ i,j \in \N \text{ and } i \ne j \},
    \\
    X_{\eps_i+\eps_j}^{\sp} = E_{i, -j} + E_{j, -i},
    \qquad\qquad
    X_{-\eps_i-\eps_j}^{\sp} = E_{-i,j} + E_{-j,i},
    \qquad\qquad
    X_{\eps_i-\eps_j}^{\sp} = E_{i,j} - E_{-j, -i}.
\end{gather*}
The subalgebra $\b_\sp$ and its ideal $\n_\sp$ are defined as
before using the polarization of the root system $\Delta_\sp =
\Delta_\sp^+ \coprod -\Delta_\sp^+$, where
\begin{equation*}
    \Delta_\sp^+ = \{ \eps_i+\eps_j \ | \ i,j \in \N \text{ and } i \le j \} \bigcup
    \{ \eps_i-\eps_j \ | \ i,j \in \N \text{ and } i < j \}.
\end{equation*}
An $\sp_\infty$-module $V$ is called a {\it highest weight module} with
highest weight $\chi \in \h_\sp^*$, if it is generated by a vector
$v\in V$ satisfying $\n_\sp \, v = 0$ and $H \, v = \chi(H)\, v$ for
all $H \in \h_\sp$. In particular, the natural representation $\VV$
is a highest weight $\sp_\infty$-module with highest weight $\eps_1$, generated
by the highest weight vector $\xi_1$.

\medskip

For any pair $I = (i,j)$ of integers such that $1 \le i < j
\le d$, the form $\Omega$ determines a contraction
\begin{equation*}
\begin{gathered}
    \Phi_{\<I\>}: \VV^{\o d} \to \VV^{\o (d-2)},\\
    v_1 \o \dots \o v_d \mapsto \Omega(v_{i},v_{j}) \, v_1 \o \dots \o \hat
    v_{i} \o \dots \o \hat v_{j} \o \dots \o v_d.
\end{gathered}
\end{equation*}
We set $\VV^{\<0\>}  \bydef \Bbbk$, $\VV^{\<1\>} \bydef \VV$ and
\begin{equation*}
    \VV^{\<d\>} \bydef \bigcap_{I} \ker
    \biggr( \Phi_{\<I\>}: \VV^{\o d} \to
    \VV^{\o(d-2)}\biggr)
\end{equation*}
for $d\ge
2$. For any partition $\lambda$ with $|\lambda|=d$ we define the
$\sp_\infty$-submodule $\spV_{\lambda}$ of $\VV^{\o d}$,
\begin{equation*}
    \spV_{\lambda} \bydef  \VV^{\<d\>} \cap \mathbb S_\lambda \VV.
\end{equation*}

\begin{thm}\label{thm:sp Schur-Weyl infinite}
For any nonnegative integer $d$ there is an isomorphism of
$(\sp_\infty,\mathfrak S_d)$-modules
\begin{equation}\label{eq:sp Schur-Weyl infinite}
    \VV^{\<d\>} \cong \bigoplus_{|\lambda| = d} \spV_\lambda \o
    H_\lambda.
\end{equation}
For every partition $\lambda$, the $\sp_\infty$-module
$\spV_{\lambda}$ is an irreducible highest weight module with
highest weight $\omega = \sum_{i \in \N} \lambda_i \, \eps_i$.
\end{thm}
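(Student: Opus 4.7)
The plan is to mirror the strategy used for Theorem \ref{thm:mixed Schur-Weyl infinite}, replacing mixed tensors and $\gl_n$-Weyl modules by ordinary tensors and the classical symplectic Weyl modules for $\sp_{2n}$. First, I fix the exhaustion of $\VV$ by the $2n$-dimensional symplectic subspaces $V_n \bydef \bigoplus_{1 \le |i| \le n} \Bbbk \, \xi_i$. The form $\Omega$ restricts to a non-degenerate antisymmetric form on $V_n$, and I denote by $\g_n$ the Lie subalgebra of $\sp_\infty$ spanned by $\{\operatorname{sign}(j) E_{i,j} - \operatorname{sign}(i) E_{-j,-i}\}_{1 \le |i|,|j| \le n}$, which is canonically isomorphic to $\sp_{2n}$. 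Setting $\h_n \bydef \h_\sp \cap \g_n$ and $\b_n \bydef \b_\sp \cap \g_n$, one checks that $\h_n$ is a Cartan and $\b_n$ a Borel subalgebra of $\g_n$, with compatible inclusions $\g_n \hookrightarrow \g_{n+1}$, $\h_n \hookrightarrow \h_{n+1}$, $\b_n \hookrightarrow \b_{n+1}$.

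Next, I invoke the classical finite-dimensional Weyl construction for $\sp_{2n}$, whose details are deferred to the Appendix: writing $\varGamma_\lambda^{(n)} \bydef (V_n)^{\<d\>} \cap \SS_\lambda V_n$, for $n \ge d$ each $\varGamma_\lambda^{(n)}$ is an irreducible $\g_n$-module whose highest weight is the restriction of $\omega = \sum_i \lambda_i \, \eps_i$ to $\h_n$, and there is a $(\g_n, \mathfrak S_d)$-module isomorphism
\begin{equation*}
    (V_n)^{\<d\>} \cong \bigoplus_{|\lambda| = d} \varGamma_\lambda^{(n)} \o H_\lambda.
\end{equation*}
As in the diagram \eqref{eq:exhaustions gl}, the natural inclusions assemble into a commutative diagram in which $(V_n)^{\<d\>} \subset (V_{n+1})^{\<d\>}$ with union $\VV^{\<d\>}$, and $\varGamma_\lambda^{(n)} \subset \varGamma_\lambda^{(n+1)}$ with union $\spV_\lambda$. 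Passing to the direct limit, these $(\g_n, \mathfrak S_d)$-isomorphisms glue into the desired $(\sp_\infty, \mathfrak S_d)$-isomorphism \eqref{eq:sp Schur-Weyl infinite}.

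To conclude that $\spV_\lambda$ is an irreducible highest weight $\sp_\infty$-module, I would argue exactly as in the closing paragraph of the proof of Theorem \ref{thm:mixed Schur-Weyl infinite}: for $n \ge d$ the one-dimensional highest weight space $\varGamma_\lambda^{(n)}[\omega]$ stabilizes as $n$ grows, so $\spV_\lambda[\omega] = \varGamma_\lambda^{(n)}[\omega]$ is one-dimensional; any nonzero $v$ in it is annihilated by $\n_\sp = \bigcup_n (\n_\sp \cap \g_n)$ and generates each $\varGamma_\lambda^{(n)}$ as a $\g_n$-module, hence generates $\spV_\lambda$ as an $\sp_\infty$-module. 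Irreducibility follows because any nonzero $\sp_\infty$-submodule of $\spV_\lambda$ must intersect some $\varGamma_\lambda^{(n)}$ nontrivially, so by $\g_n$-irreducibility contains it, and in particular contains the highest weight line, hence equals $\spV_\lambda$. The main effort lies in verifying the classical finite-dimensional Weyl construction for $\sp_{2n}$, in particular the semisimplicity of symplectic trace-free tensors and the identification of the multiplicity spaces with $H_\lambda$; this is the only nonroutine ingredient, and is deferred to the Appendix.
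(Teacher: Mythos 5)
Your proposal is correct and follows essentially the same route as the paper: exhaust $\VV$ by the finite-dimensional symplectic subspaces $V_n$, invoke the finite-dimensional Weyl construction for $\sp_{2n}$, and pass to the direct limit via the diagram of inclusions, with the highest-weight and irreducibility statements obtained from the stabilization of the one-dimensional highest weight spaces. The only difference is presentational — you spell out the irreducibility argument a bit more explicitly than the paper, which simply refers back to the analogous argument in the proof of Theorem \ref{thm:mixed Schur-Weyl infinite}.
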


\begin{proof}
For each $n$, denote by $V_{n}$ the $2n$-dimensional subspace of
$\VV$ spanned by $\xi_{\pm 1},\dots,\xi_{\pm n}$, and identify the
Lie subalgebra of $\End(V_n)$ which preserves the restriction of the form $\Omega$
to $V_{n}$ with the Lie algebra $\sp_{2n}$. The tensor representations
$(V_{n})^{\o d}$ of $\sp_{2n}$ and the contractions
$\Phi_{\<I\>}^{(n)}: (V_{n})^{\o d} \to (V_{n})^{\o(d-2)}$ are
defined as before. We set
\begin{equation*}
    (V_{n})^{\<d\>} \bydef \bigcap_{I} \ker \Phi_{\<I\>}^{(n)},
    \qquad\qquad
    \varGamma_{\<\lambda\>}^{(n)} \bydef  (V_{n})^{ \<d\> } \cap \SS_\lambda V_{n}.
\end{equation*}
The Weyl construction for $\sp_{2n}$ implies that there are
isomorphisms of $(\sp_{2n},\mathfrak S_d)$-modules
\begin{equation}\label{eq:sp Schur-Weyl finite}
    (V_n)^{\<d\>} \cong \bigoplus_{|\lambda|=d}
    \varGamma_{\<\lambda\>}^{(n)} \o H_\lambda,
\end{equation}
and that for $n\ge d$ the module $\varGamma_{\<\lambda\>}^{(n)}$ is
an irreducible highest weight module with highest weight $\sum_{i
\in \N} \lambda_i \, \eps_i$. As in the proof of Theorem
\ref{thm:mixed Schur-Weyl infinite}, the diagram of inclusions
\begin{equation}\label{eq:exhaustions sp}
\begin{gathered}\xymatrix@C=0pt@R=0pt{
    (V_d)^{\<d\>} & \subset & (V_{d+1})^{\<d\>} & \subset &\dots &\subset &(V_n)^{\<d\>}
    &\subset &\dots &\subset  &\bigcup_{n\in\N} (V_n)^{ \<d\> } & = & \VV^{\<d\>}
    \\
    \cup &&      \cup &&     &&     \cup &&     &&     \cup &&    \cup
    \\
    \varGamma_{\<\lambda\>}^{(d)} & \subset & \varGamma_{\<\lambda\>}^{(d+1)} &\subset &\dots &\subset &\varGamma_{\<\lambda\>}^{(n)}
    &\subset &\dots &\subset &\bigcup_{n\in\N} \varGamma_{\<\lambda\>}^{(n)} & = & \spV_{\lambda}
    }
\end{gathered}
\end{equation}
establishes both the irreducibility of the $\sp_\infty$-module
$\spV_{\lambda}$ and the isomorphism \eqref{eq:sp Schur-Weyl
infinite}, and the equalities
\begin{equation*}
    \varGamma_{\<\lambda\>}^{(d)}[\chi] = \varGamma_{\<\lambda\>}^{(d+1)}[\chi]=
    \dots = \varGamma_{\<\lambda\>}^{(n)}[\chi] = \dots = \bigcup_{n\in\N} \varGamma_{\<\lambda\>}^{(n)}[\chi] = \spV_{\lambda}[\chi]
\end{equation*}
yields the characterization of $\spV_{\lambda}[\chi]$ as a highest weight $\sp_\infty$-module.
\end{proof}


Next, we describe the socle filtration of the tensor representations
of $\sp_\infty$.
\begin{thm}
For any nonnegative integer \, $d$ \, the $\gl_\infty$-module
$\VV^{\o d}$, regarded as an $\sp_\infty$-module, has Loewy length
$\[\frac d2\]+1$, and
\begin{equation}\label{eq:socle series sp}
    \soc^{(r)} \VV^{\o d} = \bigcap_{I_1,\dots,I_r}
    \ker \biggr( \Phi_{\<I_1,\dots,I_r\>}: \VV^{\o d} \to \VV^{\o (d-2r)} \biggr),
    \qquad
    r = 1, \dots ,\[\frac d2\].
\end{equation}
\end{thm}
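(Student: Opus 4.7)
The plan is to mirror the proof of Theorem \ref{thm:socle series gl}, replacing the $\gl_\infty$ contractions by the symplectic contractions $\Phi_{\<I\>}$ and the modules $\glV_{\lambda;\mu}$ by $\spV_\lambda$. Let $\FF^{(r)}$ denote the right-hand side of \eqref{eq:socle series sp}, and for each $n\ge d$ set
\[
F_n^{(r)} \bydef \bigcap_{I_1,\dots,I_r} \ker\bigl(\Phi^{(n)}_{\<I_1,\dots,I_r\>}\colon (V_n)^{\o d} \to (V_n)^{\o(d-2r)}\bigr),
\]
with the convention $F_n^{(0)} = 0$ and $F_n^{([d/2]+1)} = (V_n)^{\o d}$. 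Using the exhaustion $\{V_n\}$ from the proof of Theorem \ref{thm:sp Schur-Weyl infinite}, one checks exactly as in the $\gl_\infty$ case that the induced maps $F_n^{(r+1)}/F_n^{(r)} \hookrightarrow F_{n+1}^{(r+1)}/F_{n+1}^{(r)}$ are injective, so each layer $\FF^{(r+1)}/\FF^{(r)}$ is the direct limit of the finite-dimensional layers $F_n^{(r+1)}/F_n^{(r)}$.

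Next I would invoke the finite-dimensional Weyl construction for $\sp_{2n}$ to produce, for $n \ge d$, a $(\sp_{2n},\mathfrak S_d)$-module isomorphism
\[
F_n^{(r+1)}/F_n^{(r)} \cong \bigoplus_{|\lambda| = d-2r} \varGamma_{\<\lambda\>}^{(n)} \o H_{\lambda,r}
\]
for suitable $\mathfrak S_d$-modules $H_{\lambda,r}$. This is the symplectic counterpart of \eqref{eq:middle layer finite} and falls out of the same invariant-theoretic computation as in the Appendix, with the Brauer algebra playing the role of the walled Brauer algebra. Passing to the direct limit via the analog of \eqref{eq:exhaustions sp} yields
\[
\FF^{(r+1)}/\FF^{(r)} \cong \bigoplus_{|\lambda| = d-2r} \spV_\lambda \o H_{\lambda,r},
\]
which is semisimple as an $\sp_\infty$-module. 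Hence $\VV^{\o d}$ admits a finite Jordan--H\"older series whose constituents $\spV_\lambda$ with $|\lambda| = d - 2r$ live in the subquotient $\FF^{(r+1)}/\FF^{(r)}$, the inclusion $\FF^{(r)} \subset \soc^{(r)} \VV^{\o d}$ follows, and the Loewy length of $\VV^{\o d}$ is at most $[d/2] + 1$.

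The reverse inclusion $\soc^{(r)} \VV^{\o d} \subset \FF^{(r)}$ is proved by induction on $r$, following the argument in the proof of Theorem \ref{thm:socle series gl} essentially verbatim. Given a simple submodule $\mathbf U \cong \spV_\lambda$ of $\VV^{\o d}/\FF^{(r)}$ with $|\lambda| = d - 2s$ and $s \ge r$, I would lift a weight generator $u$ of $\mathbf U$ to $\VV^{\o d}$, choose $m$ large enough so that $u \in (V_m)^{\o d}$ already generates a copy of $\varGamma_{\<\lambda\>}^{(m)}$, and consider the isotypic projections $\pi_n \colon F_n^{(s+1)} \to \varGamma_{\<\lambda\>}^{(n)} \o H_{\lambda,s}$ for $n > m$. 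Since $\mathbf U$ is simple and contained in $\VV^{\o d}/\FF^{(r)}$, one has $u - \pi_n(u) \in F_n^{(r)}$ for all $n$; an explicit description of $\pi_n$ then gives $u - \pi_n(u) \notin F_n^{(s-1)}$ for infinitely many $n$, forcing $s = r$. Since $\VV^{\o d}/\FF^{([d/2])}$ contains $\spV_0 = \Bbbk$ when $d$ is even and $\spV_{(1)} = \VV$ when $d$ is odd, the Loewy length is in fact exactly $[d/2] + 1$.

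The principal obstacle is the last technical claim, namely that $u - \pi_n(u) \notin F_n^{(s-1)}$ for infinitely many $n$. In the $\gl_\infty$ setting this is precisely Proposition \ref{thm:proval} in the Appendix, proved via an explicit formula for $\pi_n$ in terms of the contractions $\Phi_I^{(n)}$. The symplectic analog should follow from a parallel explicit formula for $\pi_n$ built from the contractions $\Phi_{\<I\>}^{(n)}$ using the combinatorics of the Brauer algebra; this is the only step of the proof that is not a direct translation of the $\gl_\infty$ argument.
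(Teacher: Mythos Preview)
Your proposal is correct and follows essentially the same approach as the paper: both treat the result as a minor variation of Theorem~\ref{thm:socle series gl}, defining the filtration $\FF^{(r)}$ via iterated symplectic contractions, identifying the layers as semisimple via the finite-dimensional Weyl construction for $\sp_{2n}$, and deferring the reverse inclusion to a symplectic analog of Proposition~\ref{thm:proval}. The paper is in fact terser than you are, simply asserting that ``the opposite inclusion is proved by an obvious modification'' of that proposition; your identification of the Brauer-algebra combinatorics as the relevant ingredient is the right way to flesh out that remark.
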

\begin{proof}
The proof is obtained as a minor variation of the proof of Theorem \ref{thm:socle series gl}.
Denoting by $\FF^{(r)}$ the module on the right side of \eqref{eq:socle series sp}, we argue as before that
\begin{equation*}
    \FF^{(r+1)}/\FF^{(r)} \cong \bigoplus_{|\lambda|=p-2r} \spV_{\lambda} \o H(\lambda;r)
\end{equation*}
for some $\mathfrak S_d$-modules $H(\lambda;r)$. The semisimplicity of these layers shows that
$\FF^{(r)} \subset \soc^{(r)} \VV^{\o d}$ for all $r$, and the opposite inclusion is proved by an obvious modification of Corollary \ref{thm:proval}.
\end{proof}

\begin{thm}\label{thm: socle sp}
For any partition $\lambda$ the $\sp_\infty$-module
$\glV_{\lambda;0}$ is indecomposable, and
\begin{equation}\label{eq:socle layers sp}
    \overline\soc^{(r+1)} \glV_{\lambda;0} = \bigoplus_{\mu}
    \( \sum_{|\gamma|=r} \LR_{\mu, \, (2\gamma)^\top}^{\lambda} \)\ \spV_{\mu},
    \qquad
    r = 1, \dots ,\[\frac d2\].
\end{equation}
\end{thm}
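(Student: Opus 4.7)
The plan is to mimic the proof of Theorem \ref{thm:indecomposables gl}, now using the socle filtration of $\VV^{\o d}$ as an $\sp_\infty$-module (just established) and invoking the classical $\gl$-to-$\sp$ branching rule in place of the Littlewood-Richardson tensor product decomposition.

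First, I would observe that Schur-Weyl duality \eqref{eq:Schur-Weyl infinite} realizes $\glV_{\lambda;0}$ as a $\gl_\infty$-direct summand of $\VV^{\o d}$ (with $d=|\lambda|$), and hence in particular as an $\sp_\infty$-direct summand. This yields
\begin{equation*}
    \soc^{(r)} \glV_{\lambda;0} = \glV_{\lambda;0} \cap \soc^{(r)} \VV^{\o d},
\end{equation*}
so each layer of the socle filtration of $\glV_{\lambda;0}$ embeds into the corresponding layer of $\VV^{\o d}$. Because the preceding theorem shows that $\spV_\mu$ occurs in $\VV^{\o d}$ only at level $r+1$ with $|\mu|=d-2r$, the multiplicity of $\spV_\mu$ in $\overline\soc^{(r+1)}\glV_{\lambda;0}$ coincides with the total Jordan-H\"older multiplicity $[\glV_{\lambda;0}:\spV_\mu]$.

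Next, I would compute $[\glV_{\lambda;0}:\spV_\mu]$ via the finite-dimensional exhaustions \eqref{eq:exhaustions gl} and \eqref{eq:exhaustions sp}. For $n$ sufficiently large, the classical branching rule of Littlewood for $\gl_{2n}\downarrow\sp_{2n}$ asserts
\begin{equation*}
    [\varGamma_{\lambda}^{(n)}\downarrow\sp_{2n} : \varGamma_{\<\mu\>}^{(n)}] = \sum_{\delta}\LR_{\mu,\delta}^{\lambda},
\end{equation*}
where $\delta$ ranges over partitions all of whose columns have even length, i.e., $\delta=(2\gamma)^\top$ for some partition $\gamma$. Since $|\delta|=2|\gamma|$ and the LR coefficient vanishes unless $|\mu|+|\delta|=|\lambda|$, the constraint $|\mu|=d-2r$ is equivalent to $|\gamma|=r$. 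The right-hand side stabilizes once $n\ge d$, and the compatibility of the exhaustions transfers the identity to the infinite-dimensional setting, producing formula \eqref{eq:socle layers sp}.

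Finally, specializing to $r=0$ forces $\gamma=0$, hence $(2\gamma)^\top=0$, so $\LR_{\mu,0}^{\lambda}=\delta_{\mu,\lambda}$ and the socle of $\glV_{\lambda;0}$ is the simple module $\spV_\lambda$. Having a simple socle then forces indecomposability. The main obstacle I anticipate is the faithful transfer of Littlewood's branching multiplicities through the exhaustion \eqref{eq:exhaustions sp}; this should follow from the stabilization of those multiplicities for $n\gg 0$ together with the identification of highest weight subspaces across the chain $\sp_{2n}\subset\sp_{2(n+1)}$, by an argument parallel to those already used in Theorems \ref{thm:mixed Schur-Weyl infinite} and \ref{thm:socle series gl}.
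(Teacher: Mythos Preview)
Your proposal is correct and follows essentially the same route as the paper: realize $\glV_{\lambda;0}$ as an $\sp_\infty$-direct summand of $\VV^{\o d}$ to get $\soc^{(r)}\glV_{\lambda;0}=\glV_{\lambda;0}\cap\soc^{(r)}\VV^{\o d}$, invoke the stable $\gl_{2n}\downarrow\sp_{2n}$ branching rule (the paper cites \cite{HTW}, you cite Littlewood --- same formula) to compute the Jordan--H\"older multiplicities, and deduce indecomposability from the simplicity of the socle. The ``obstacle'' you flag is handled exactly as in the earlier proofs and is not a genuine issue.
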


\begin{proof}
By construction, the $\gl_\infty$-module $\glV_{\lambda;0}$ is
realized as the direct summand $\mathbb S_{\lambda} \VV$ of the
$\gl_\infty$-module $\VV^{\o d}$. It remains a direct summand when
$\VV^{\o d}$ is regarded as an $\sp_\infty$-module, therefore $\soc^{(r)}
\glV_{\lambda;0} = \glV_{\lambda;0} \cap \soc^{(r)} \VV^{\o d}$.

It is known that for any partitions $\lambda,\mu$ we have
$[\varGamma_{\lambda}^{(2n)} : \varGamma_{\<\mu\>}^{(n)}] =
\sum_{\gamma} \LR_{\mu, \, (2\gamma)^\top}^{\lambda}$ provided $n$
is large enough, see e.g. \cite{HTW}. This implies that
$[\glV_{\lambda; 0} : \spV_{\mu}] = \sum_{\gamma} \LR_{\mu, \,
(2\gamma)^\top}^{\lambda}$, and combining it with the description of
the socle filtration of $\VV^{\o d}$, we get \eqref{eq:socle layers
sp}. In particular, $\soc \glV_{\lambda;0} \cong \spV_{\lambda}$,
and the simplicity of the socle implies the indecomposability of
$\glV_{\lambda;0}$ as an $\sp_\infty$-module.
\end{proof}


\noindent {\bf Examples.} We begin by describing the structure of
$\VV \o \VV = \Lambda^2 \VV \oplus S^2 \VV$ as an $\sp_\infty$-module. The symmetric square
$S^2 \VV = \glV_{(2);(0)}$ is the irreducible adjoint
$\sp_\infty$-module, isomorphic to $\spV_{(2)}$. For the exterior
square $\Lambda^2 \VV = \glV_{(1,1);(0)}$ one has the short exact
sequence of $\sp_\infty$-modules
\begin{equation*}
    0 \to \spV_{(1,1)} \to \Lambda^2 \VV \overset \Omega\to \Bbbk \to 0
\end{equation*}
which does not split. Therefore the structure of $\VV \o \VV$ is
graphically represented as
\begin{equation*}
    \VV\o \VV \quad\sim\quad
    \begin{tabular}{|c|}
        \multicolumn{1}{r}{ }\\
        \hline
        $\spV_{(2)}$\\
        \hline
    \end{tabular}
    \oplus
    \begin{tabular}{|c|}
        \hline
        $\spV_{(0)}$\\
        \hline
        $\spV_{(1,1)}$\\
        \hline
    \end{tabular}\, .
\end{equation*}

\bigskip

\noindent Similarly, the structure of the tensor representation of rank 3 is
represented as
\begin{equation*}
    \VV^{\o3} \quad\sim\quad
    \begin{tabular}{|c|}
        \multicolumn{1}{c}{ }\\
        \hline
        $\spV_{(3)}$\\
        \hline
    \end{tabular}
    \ \oplus 2 \
    \begin{tabular}{|c|}
        \hline
        $\spV_{(1)}$\\
        \hline
        $\spV_{(2,1)}$\\
        \hline
    \end{tabular}
    \ \oplus \
    \begin{tabular}{|c|}
        \hline
        $\spV_{(1)}$\\
        \hline
        $\spV_{(1,1,1)}$\\
        \hline
    \end{tabular}\ ,
\end{equation*}
and the structure of the tensor representation of rank 4 as
\begin{equation*}
    \VV^{\o4} \quad\sim\quad
        \begin{tabular}{|c|}
        \multicolumn{1}{c}{ }\\
        \multicolumn{1}{c}{ }\\
        \hline
        $\spV_{(4)}$\\
        \hline
    \end{tabular}
    \ \oplus 3 \
    \begin{tabular}{|c|}
        \multicolumn{1}{c}{ }\\
        \hline
        $\spV_{(2)}$\\
        \hline
        $\spV_{(3,1)}$\\
        \hline
    \end{tabular}
    \ \oplus 2 \
    \begin{tabular}{|c|}
        \hline
        $\spV_{(0)}$\\
        \hline
        $\spV_{(1,1)}$\\
        \hline
        $\spV_{(2,2)}$\\
        \hline
    \end{tabular}
    \ \oplus 3 \
    \begin{tabular}{|c|}
        \multicolumn{1}{c}{ }\\
        \hline
        $\spV_{(2)} \oplus \soV_{(1,1)}$\\
        \hline
        $\spV_{(2,1,1)}$\\
        \hline
    \end{tabular}
    \ \oplus \
    \begin{tabular}{|c|}
        \hline
        $\spV_{(0)}$\\
        \hline
        $\spV_{(1,1)}$\\
        \hline
        $\spV_{(1,1,1,1)}$\\
        \hline
    \end{tabular}\ .
\end{equation*}

\section{Tensor representations of $\so_\infty$}


Let $\VV$ be a countable dimensional vector space, and let $Q: \VV
\o \VV \to \Bbbk$ be a non-degenerate symmetric bilinear form on
$\VV$. We realize the Lie algebra $\gl_\infty$ as in Section
\ref{sec:gl} by taking $\VV_* = \VV$ and $Q$ as the pairing $\<\cdot,\cdot\>$. The Lie
algebra $\so_\infty^{(Q)}$ is defined as the Lie subalgebra of this
$\gl_\infty$, which preserves the form $Q$, i.e.
\begin{equation*}
    \so_\infty^{(Q)} = \left\{ X \in \gl_\infty \,\biggr|\, Q(X u,v) + Q(u,X v) = 0 \text{ for all } u,v \in \VV
    \right\}.
\end{equation*}
It is customary when dealing with the orthogonal groups to work over
algebraically closed fields. Throughout this section we assume
that $\Bbbk$ is algebraically closed. Then all non-degenerate forms
$Q$ on $\VV$ are equivalent (e.g. they can all be transformed to the
standard sum of squares), and as a consequence the corresponding
Lie algebras $\so_\infty^{(Q)}$ are isomorphic. Hence we drop $Q$
from the notation, and denote our Lie algebra simply by $\so_\infty$.

It is convenient to pick a basis $\{\xi_i\}$ of $\VV$, indexed as
before by the set $\mathfrak I = \Z\setminus\{0\}$, such that
$Q(\xi_i,\xi_j) = \delta_{i+j,0}$. In the coordinate realization
of $\gl_\infty$, the Lie algebra $\so_\infty$ has a linear basis
$\{E_{i,j} - E_{-j,-i}\}_{i,j\in \mathfrak I}$. Since the dual basis
$\{\xi_i^*\}_{i \in \mathfrak I}$ is given by $\xi_i^* = \xi_{-i}$,
it follows that $\so_\infty = \bigwedge^2 \VV$,
and the Lie bracket is induced by \eqref{eq:def gl Lie bracket}.

We call $\VV$, regarded as an $\so_\infty$-module by restriction,
the {\it natural representation} of $\so_\infty$.
It is easy to see that the $\so_\infty$-action on the  $\gl_\infty$-module $\VV^{\o(p,q)}$ coincides with the $\so_\infty$-action on the $\gl_\infty$-module $\VV^{\o(p+q)}$. Therefore it suffices to study the tensor representations $\VV^{\o d}$.

To define the notion of a highest weight $\so_\infty$-module, we
consider the direct sum decomposition $\so_\infty = \h_\so \oplus
\( \bigoplus_{\a \in \Delta_{\sp}} \, \Bbbk \, X_\a^{\so} \)$, where
\begin{gather*}
    \h_\so = \bigoplus_{i \in \mathfrak I} \ \Bbbk \, (E_{i,i} - E_{-i,-i}),
    \qquad\qquad
    \Delta_\so = \{ \pm \eps_i \pm \eps_j) \ | \ i,j \in \N \text{ and } i \ne j \},
    \\
    X_{\eps_i+\eps_j}^{\so} = E_{i,-j} - E_{j,-i}, \qquad
    X_{\eps_i-\eps_j}^{\so} = E_{i,j} - E_{ -j,  -i}, \qquad
    X_{-\eps_i-\eps_j}^{\so} = E_{-j,i} - E_{-i,j}.
\end{gather*}
The subalgebras $\b_\so$ and its ideal $\n_\so$ are defined as
before using the polarization of the root system $\Delta_\so =
\Delta_\so^+ \coprod -\Delta_\so^+$, where
\begin{equation*}
    \Delta_\so^+ = \{ \eps_i \pm \eps_j \ | \ i,j \in \N \text{ and } i < j \}.
\end{equation*}
An $\so_\infty$-module $V$ is called a {\it highest weight module} with
highest weight $\chi \in \h_\so^*$, if it is generated by a vector
$v\in V$ satisfying $\n_\so \, v = 0$ and $H \, v = \chi(H)\, v$ for
all $H \in \h_\so$. In particular, the natural representation $\VV$
is a highest weight module with highest weight $\eps_1$, generated
by a highest weight vector $\xi_1$.

\begin{rem}
Our choice of a splitting Cartan subalgebra $\h_\so$ leads to a
root decomposition of type $D_\infty$, corresponding to the "even"
infinite orthogonal series of Lie algebras $\so_{2n}$. It is also possible to
pick another splitting Cartan subalgebra $\tilde{\h}_\so$ which leads
to a root decomposition of type $B_\infty$, corresponding to the
"odd" infinite orthogonal series $\so_{2n+1}$. These two types of splitting Cartan subalgebras
are clearly not conjugated, and in \cite{DPS} it is proved that any splitting Cartan subalgebra of $\so_\infty$ is either "even" or "odd".
\end{rem}

\medskip

For any pair $I = (i,j)$ of integers, satisfying $1 \le i < j \le
d$, define the contraction
\begin{equation}
\begin{gathered}
    \Phi_{[I]}: \VV^{\o d} \to \VV^{\o (d-2)},\\
    v_1 \o \dots \o v_d \mapsto Q(v_i,v_j) \, v_1 \o \dots \o \hat
    v_i \o \dots \o \hat v_j \o \dots \o v_d,
\end{gathered}
\end{equation}
We set $\VV^{[0]}  \bydef \Bbbk$, $\VV^{[1]} \bydef \VV$ and
\begin{equation}
    \VV^{[d]} \bydef \bigcap_{I} \ker
    \biggr( \Phi_{[I]}: \VV^{\o d} \to
    \VV^{\o(d-2)}\biggr)
\end{equation}
for $d\ge2$. For any partition $\lambda$, define the $\so_\infty$-submodule
$\soV_{\lambda}$ of $\VV^{\o d}$,
\begin{equation*}
    \soV_{\lambda} \bydef  \VV^{[d]} \cap \mathbb S_\lambda \VV.
\end{equation*}

\begin{thm}\label{thm:so Schur-Weyl infinite}
For any $d \in \N$ there is an isomorphism of $(\so_\infty,\mathfrak
S_d)$-modules
\begin{equation}\label{eq:so Schur-Weyl infinite}
    \VV^{[d]} \cong \bigoplus_{|\lambda| = d} \soV_\lambda \o
    H_\lambda.
\end{equation}
For every partition $\lambda$, the $\so_\infty$-module
$\soV_{\lambda}$ is an irreducible highest weight module with
highest weight $\omega = \sum_{i \in \N} \lambda_i \, \eps_i$.
\end{thm}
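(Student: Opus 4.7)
The plan is to carry out the exact same argument as in the proof of Theorem \ref{thm:sp Schur-Weyl infinite}, substituting orthogonal groups for symplectic groups and the contractions $\Phi_{[I]}$ for $\Phi_{\<I\>}$ throughout. Concretely, for each $n$ let $V_n$ denote the $2n$-dimensional subspace of $\VV$ spanned by $\xi_{\pm 1},\dots,\xi_{\pm n}$, and identify the Lie subalgebra of $\End(V_n)$ preserving the restriction of $Q$ to $V_n$ with $\so_{2n}$. Define the obvious finite-dimensional counterparts
\begin{equation*}
    (V_n)^{[d]} \bydef \bigcap_I \ker \Phi_{[I]}^{(n)},
    \qquad\qquad
    \varGamma_{[\lambda]}^{(n)} \bydef (V_n)^{[d]} \cap \SS_\lambda V_n.
\end{equation*}

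The finite-dimensional Weyl construction for $\so_{2n}$ supplies the two inputs needed: for $n\ge d$ there is an $(\so_{2n},\mathfrak S_d)$-module isomorphism
\begin{equation*}
    (V_n)^{[d]} \cong \bigoplus_{|\lambda|=d} \varGamma_{[\lambda]}^{(n)} \o H_\lambda,
\end{equation*}
and each $\varGamma_{[\lambda]}^{(n)}$ is irreducible with highest weight $\sum_i \lambda_i \eps_i$ with respect to the Cartan subalgebra $\h_n \bydef \h_\so \cap \g_n$ and the Borel $\b_n\bydef \b_\so \cap \g_n$. Passing to the direct limit along the inclusions $V_n\hookrightarrow V_{n+1}$ and $\g_n\hookrightarrow \g_{n+1}$ produces a diagram of inclusions fully analogous to \eqref{eq:exhaustions sp}. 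From this diagram one simultaneously deduces the isomorphism \eqref{eq:so Schur-Weyl infinite} and the irreducibility of $\soV_\lambda$ as an $\so_\infty$-module: any nonzero $\so_\infty$-submodule intersects some $\varGamma_{[\lambda]}^{(n)}$ nontrivially, hence contains all of $\varGamma_{[\lambda]}^{(n)}$ by irreducibility in finite rank, and therefore all of $\soV_\lambda = \bigcup_{n\in\N}\varGamma_{[\lambda]}^{(n)}$ by stability under further inclusions.

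To complete the proof, observe that the weight space $\varGamma_{[\lambda]}^{(n)}[\chi]$ is one-dimensional for $\chi = \sum_i \lambda_i \eps_i$ and $n\ge d$, and that the natural inclusions identify all these lines, so $\soV_\lambda[\chi]=\bigcup_{n\in\N}\varGamma_{[\lambda]}^{(n)}[\chi]$ is itself one-dimensional. Since $\n_\so = \bigcup_{n\in\N}\n_n$, any nonzero $v\in \soV_\lambda[\chi]$ satisfies $\n_\so\cdot v = \bigcup_n \n_n\cdot v = 0$, and $H\cdot v = \chi(H)\,v$ for all $H\in\h_\so$; being cyclic in the irreducible module $\soV_\lambda$, $v$ then generates it, so $\soV_\lambda$ is a highest weight module in the sense defined above.

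The main (mild) obstacle is ensuring that one avoids the borderline regime of the $\so_{2n}$ Weyl construction, namely partitions with $\ell(\lambda)=n$, where the orthogonal highest weight $\pm \lambda_n\eps_n$ becomes ambiguous under the outer automorphism of the $D_n$ root system. This is circumvented automatically by starting the exhaustion at $n = d+1$: then $|\lambda|=d$ forces $\ell(\lambda)\le d < n$, placing every $\lambda$ in the well-behaved range of the classical construction. With this shift of starting index, all remaining arguments are word-for-word the same as in the symplectic case.
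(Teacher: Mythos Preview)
Your proof is correct and follows essentially the same approach as the paper's own proof: both reduce to the finite-dimensional Weyl construction for $\so_{2n}$ and pass to the direct limit via the diagram of inclusions, exactly as in the symplectic case. Your additional remark about starting the exhaustion at $n=d+1$ to avoid the $\ell(\lambda)=n$ ambiguity in type $D_n$ is a nice clarification that the paper leaves implicit.
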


\begin{proof}
For each $n$ denote by $V_{n}$ the $2n$-dimensional subspace of
$\VV$ spanned by $\xi_{\pm 1},\dots,\xi_{\pm n}$. Let $\g_n$ be the Lie subalgebra of $\End(V_n)$ which preserves the restriction
of $Q$ to $V_{n}$. It is clear that $\g_n \cong \so_{2n}$. The
tensor representations $(V_{n})^{\o d}$ of $\g_n$ and the
contractions $\Phi_{[I]}^{(n)}: (V_{n})^{\o d)} \to
(V_{n})^{\o(d-2)}$ are defined as before. We set
\begin{equation*}
    (V_{n})^{[d]} \bydef \bigcap_{I} \ker \Phi_{[I]}^{(n)},
    \qquad\qquad
    \varGamma_{[\lambda]}^{(n)} \bydef  (V_{n})^{ [d] } \cap \SS_\lambda V_{n}.
\end{equation*}
The Weyl construction for $\so_{2n}$ implies that there are isomorphisms of $(\so_{2n},\mathfrak S_d)$-modules
\begin{equation}\label{eq:so Schur-Weyl finite}
    (V_n)^{[d]} \cong \bigoplus_{|\lambda|=d}
    \varGamma_{[\lambda]}^{(n)} \o H_\lambda,
\end{equation}
and that for $n\ge d$ the module $\varGamma_{[\lambda]}^{(n)}$ is
an irreducible highest weight module with highest weight $\sum_{i
\in \N} \lambda_i \, \eps_i$. As in the proof of Theorem
\ref{thm:mixed Schur-Weyl infinite}, the diagram of inclusions
\begin{equation}\label{eq:exhaustions so}
\begin{gathered}\xymatrix@C=0pt@R=0pt{
    (V_1)^{[d]} & \subset & (V_2)^{[d]} & \subset &\dots &\subset &(V_n)^{[d]}
    &\subset &\dots &\subset  &\bigcup_{n\in\N} (V_n)^{ [d] } & = & \VV^{[d]}
    \\
    \cup &&      \cup &&     &&     \cup &&     &&     \cup &&    \cup
    \\
    \varGamma_{[\lambda]}^{(1)} & \subset & \varGamma_{[\lambda]}^{(2)} &\subset &\dots &\subset &\varGamma_{[\lambda]}^{(n)}
    &\subset &\dots &\subset &\bigcup_{n\in\N} \varGamma_{[\lambda]}^{(n)} & = & \soV_{\lambda}
    }
\end{gathered}
\end{equation}
establishes both the irreducibility of the $\so_\infty$-module
$\spV_{\lambda}$ and the isomorphism \eqref{eq:so Schur-Weyl
infinite}, and the equalities
\begin{equation*}
    \varGamma_{\<\lambda\>}^{(d)}[\chi] = \varGamma_{\<\lambda\>}^{(d+1)}[\chi]=
    \dots = \varGamma_{\<\lambda\>}^{(n)}[\chi] = \dots = \bigcup_{n\in\N} \varGamma_{\<\lambda\>}^{(n)}[\chi] = \spV_{\lambda}[\chi]
\end{equation*}
yield the characterization of $\soV_{\lambda}[\chi]$ as a highest weight $\so_\infty$-module.
\end{proof}


\bigskip

\begin{thm}
For any nonnegative integer $d$ the Loewy length of $\VV^{\o d}$,
regarded as an $\so_\infty$-module, equals $\[\frac d2\]+1$, and
\begin{equation}\label{eq:socle series so}
    \soc^{(r)} \VV^{\o d} = \bigcap_{I_1,\dots,I_r}
    \ker \biggr( \Phi_{[I_1,\dots,I_r]}: \VV^{\o d} \to \VV^{\o (d-2r)} \biggr),
    \qquad
    r = 1,\dots ,\[\frac d2\].
\end{equation}
\end{thm}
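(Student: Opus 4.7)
The plan is to follow the exact pattern of the proof of Theorem \ref{thm:socle series gl} (and its $\sp_\infty$ analog), which was already characterized as a "minor variation" at each stage. Denote by $\FF^{(r)}$ the $\so_\infty$-submodule on the right-hand side of \eqref{eq:socle series so}, with the conventions $\FF^{(0)} = 0$ and $\FF^{([d/2]+1)} = \VV^{\o d}$. Using the exhaustion $\{V_n\}$ of $\VV$ by $2n$-dimensional subspaces introduced in the proof of Theorem \ref{thm:so Schur-Weyl infinite}, I would define the finite-dimensional analogs $F_n^{(r)} = \bigcap_{I_1,\ldots,I_r} \ker \Phi^{(n)}_{[I_1,\ldots,I_r]}$. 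The inclusions $V_n \subset V_{n+1}$ induce inclusions $F_n^{(r)} \hookrightarrow F_{n+1}^{(r)}$, and $\FF^{(r)} = \bigcup_n F_n^{(r)}$.

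Next, I would establish the orthogonal analog of isomorphism \eqref{eq:middle layer finite}, namely $(\g_n, \mathfrak{S}_d)$-module isomorphisms
\[
    F_n^{(r+1)}/F_n^{(r)} \cong \bigoplus_{|\lambda|=d-2r} \varGamma^{(n)}_{[\lambda]} \o H(\lambda;r)
\]
for suitable $\mathfrak{S}_d$-modules $H(\lambda;r)$, obtained by a standard Weyl-construction computation using the symmetric form $Q$ in place of $\Omega$. Checking (as in the proof of Theorem \ref{thm:socle series gl}) that the natural maps $F_n^{(r+1)}/F_n^{(r)} \to F_{n+1}^{(r+1)}/F_{n+1}^{(r)}$ remain injective, direct-limit passage yields
\[
    \FF^{(r+1)}/\FF^{(r)} \cong \bigoplus_{|\lambda|=d-2r} \soV_\lambda \o H(\lambda;r),
\]
so every layer is a semisimple $\so_\infty$-module by Theorem \ref{thm:so Schur-Weyl infinite}. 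This immediately gives $\FF^{(r)} \subset \soc^{(r)} \VV^{\o d}$ and also shows that the chain $\FF^{(0)} \subsetneq \FF^{(1)} \subsetneq \dots \subsetneq \FF^{([d/2]+1)}$ is strict, which pins down the Loewy length at $[d/2]+1$ once equality with the socle filtration is proved.

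For the opposite inclusion $\soc^{(r)} \VV^{\o d} \subset \FF^{(r)}$, I would induct on $r$, mimicking verbatim the final portion of the proof of Theorem \ref{thm:socle series gl}. Assuming $\FF^{(r)} = \soc^{(r)} \VV^{\o d}$, pick a simple submodule $\mathbf{U} \cong \soV_\mu$ of $\VV^{\o d}/\FF^{(r)}$, let $s$ be determined by $|\mu| = d - 2s$ (so $\mathbf{U} \subset \FF^{(s+1)}/\FF^{(r)}$), and choose a preimage $u$ of a highest weight generator of $\mathbf{U}$ that lies in $(V_m)^{\o d}$ for some large $m$ and generates a $\g_m$-submodule isomorphic to $\varGamma^{(m)}_{[\mu]}$. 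Letting $\pi_n: F_n^{(s+1)} \to \varGamma^{(n)}_{[\mu]} \o H(\mu;s)$ be the projection determined by the layer decomposition, the orthogonal analog of Proposition \ref{thm:proval} yields $u - \pi_n(u) \notin F_n^{(s-1)}$ for infinitely many $n$, whereas the fact that $u$ generates a copy of $\varGamma^{(n)}_{[\mu]}$ in $F_n^{(s+1)}/F_n^{(r)}$ forces $u - \pi_n(u) \in F_n^{(r)}$. This gives $s \le r$, so in fact $s = r$, completing the induction.

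The principal obstacle I anticipate is verifying the two finite-dimensional technical inputs in the orthogonal setting: the explicit identification of the layer decomposition $F_n^{(r+1)}/F_n^{(r)}$, and the orthogonal version of the Appendix Proposition \ref{thm:proval}. Neither is difficult in principle, since the $Q$-contractions are structurally parallel to the contractions $\Phi_I$ used for $\gl_\infty$ and the $\Omega$-contractions used for $\sp_\infty$, but both require correct bookkeeping of paired indices and of the weight data relative to the $D_\infty$ root system for $\so_\infty$.
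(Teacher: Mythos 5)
Your proposal is correct and follows essentially the same route as the paper, which states only that "the proof is a minor variation of the proof of Theorem \ref{thm:socle series gl}," identifies the layer decomposition $\FF^{(r+1)}/\FF^{(r)} \cong \bigoplus_{|\lambda|=d-2r} \soV_\lambda \o H(\lambda;r)$, and defers to "an obvious modification" of the Appendix proposition for the reverse inclusion. You have simply spelled out the details the paper leaves implicit (including the strictness of the chain, which determines the Loewy length) and correctly identified the two finite-dimensional technical inputs whose orthogonal versions must be checked.
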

\begin{proof}
The proof is a minor variation of the proof of Theorem \ref{thm:socle series gl}.
Denoting by $\FF^{(r)}$ the module on the right side of \eqref{eq:socle series so}, we argue as before that
\begin{equation*}
    \FF^{(r+1)}/\FF^{(r)} \cong \bigoplus_{|\lambda|=p-2r} \soV_{\lambda} \o H(\lambda;r)
\end{equation*}
for some $\mathfrak S_d$-modules $H(\lambda;r)$. The semisimplicity of these layers shows that
$\FF^{(r)} \subset \soc^{(r)} \VV^{\o d}$ for all $r$, and the opposite inclusion is proved by an obvious modification of Corollary \ref{thm:proval}.
\end{proof}

\bigskip

\begin{thm}
Let $\lambda$ be a partition of a positive integer $d$. Then the
$\so_\infty$-module $\glV_{\lambda;0}$ is indecomposable, and the
layers of its socle filtration satisfy
\begin{equation}\label{eq:socle layers so}
    \overline\soc^{(r+1)} \glV_{\lambda;0} \cong
    \bigoplus_{\mu} \( \sum_{|\gamma|=r} \LR_{\mu, \, 2\gamma}^{\lambda} \)\
    \soV_{\mu}.
\end{equation}

\end{thm}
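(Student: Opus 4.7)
The plan is to run the argument of Theorem~\ref{thm: socle sp} verbatim, with the appropriate orthogonal branching rule replacing the symplectic one. By construction, the $\gl_\infty$-module $\glV_{\lambda;0}$ is realized as the direct summand $\mathbb S_\lambda \VV$ of $\VV^{\o d}$, and this direct sum decomposition persists when $\VV^{\o d}$ is regarded as an $\so_\infty$-module, since the $\so_\infty$-action is obtained by restricting the $\gl_\infty$-action. Consequently
\begin{equation*}
    \soc^{(r)} \glV_{\lambda;0} = \glV_{\lambda;0} \cap \soc^{(r)} \VV^{\o d},
\end{equation*}
and the description of the socle filtration of $\VV^{\o d}$ from the preceding theorem reduces the statement to a multiplicity computation.

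The next step is to identify the Jordan--H\"older multiplicities $[\glV_{\lambda;0} : \soV_\mu]$. The classical orthogonal branching (Littlewood) formula asserts that, for $n$ sufficiently large, $[\varGamma_\lambda^{(n)} : \varGamma_{[\mu]}^{(n)}] = \sum_\gamma \LR_{\mu,\, 2\gamma}^\lambda$, where the sum runs over partitions $\gamma$ with $|\mu| + 2|\gamma| = |\lambda|$; see e.g.\ \cite{HTW}. Passing to the direct limit via the exhaustion \eqref{eq:exhaustions so} and invoking Theorem~\ref{thm:so Schur-Weyl infinite} preserves these multiplicities, so
\begin{equation*}
    [\glV_{\lambda;0} : \soV_\mu] = \sum_\gamma \LR_{\mu,\, 2\gamma}^\lambda.
\end{equation*}

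Combining this with the description of the layers of the socle filtration of $\VV^{\o d}$ from the previous theorem, which places $\soV_\mu$ with $|\mu| = d - 2r$ in the $(r+1)$-st layer, yields the claimed decomposition \eqref{eq:socle layers so}. Setting $r=0$ in this formula, only $\gamma = 0$ contributes and we obtain $\soc \glV_{\lambda;0} \cong \soV_\lambda$; since this socle is simple, $\glV_{\lambda;0}$ is indecomposable as an $\so_\infty$-module.

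No genuine obstacle arises: the argument is completely parallel to the symplectic case, and the only substantive change is the replacement of $(2\gamma)^\top$ by $2\gamma$ in the branching coefficients, reflecting the classical Littlewood duality between alternating and symmetric bilinear forms. The technical point that deserves attention is the verification that the finite-dimensional Littlewood multiplicities stabilize along the exhaustion \eqref{eq:exhaustions so}, but this follows from \cite{HTW} once $n$ exceeds the length of $\lambda$.
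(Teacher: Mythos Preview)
Your proof is correct and follows essentially the same route as the paper: realize $\glV_{\lambda;0}$ as a direct summand of $\VV^{\o d}$, intersect with the socle filtration of $\VV^{\o d}$ from the preceding theorem, invoke the stable Littlewood branching multiplicities from \cite{HTW}, and conclude indecomposability from the simplicity of the socle. The only cosmetic discrepancy is that the paper writes the finite-level multiplicity as $[\varGamma_{\lambda}^{(2n)}:\varGamma_{[\mu]}^{(n)}]$ (since in the orthogonal section $V_n$ is $2n$-dimensional), whereas you write $[\varGamma_\lambda^{(n)}:\varGamma_{[\mu]}^{(n)}]$; this is harmless for the argument.
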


\begin{proof}
By construction, the $\gl_\infty$-module $\glV_{\lambda;0}$ is
realized as the direct summand $\mathbb S_{\lambda} \VV$ of the
$\gl_\infty$-module $\VV^{\o d}$. It remains a direct summand when
$\VV^{\o d}$ is regarded as an $\so_\infty$-module, therefore $\soc^{(r)}
\glV_{\lambda;0} = \glV_{\lambda;0} \cap \soc^{(r)} \VV^{\o d}$.

It is known that
$[\varGamma_{\lambda}^{(2n)} : \varGamma_{[\mu]}^{(n)}] =
\sum_{\gamma} \LR_{\mu, \, 2\gamma}^{\lambda}$ for any partitions $\lambda,\mu$, provided $n$ is
large enough, see e.g. \cite{HTW}. Hence
$[\glV_{\lambda; 0} : \soV_{\mu}] = \sum_{\gamma} \LR_{\mu, \,
2\gamma}^{\lambda}$, and combining this with the description of the
socle filtration of $\VV^{\o d}$, we get \eqref{eq:socle layers so}.
In particular, $\soc \glV_{\lambda;0} \cong \soV_{\lambda}$, and
simplicity of the socle implies the indecomposability of
$\glV_{\lambda;0}$ as an $\so_\infty$-module.
\end{proof}

\begin{cor}
The decomposition of $\VV^{\o d}$ into indecomposable
$\so_\infty$-modules is given by the isomorphism
\begin{equation*}
    \VV^{\o d} \cong \bigoplus_{|\lambda| = d} (\dim H_\lambda) \, \glV_{\lambda;0}.
\end{equation*}
\end{cor}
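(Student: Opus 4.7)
The plan is to combine two earlier ingredients: the Schur-Weyl decomposition of $\VV^{\o d}$ as a $(\gl_\infty,\mathfrak S_d)$-module, and the indecomposability of each $\glV_{\lambda;0}$ as an $\so_\infty$-module established in the preceding theorem. No new calculation is needed; the only content is the passage from the $\gl_\infty$-decomposition to an $\so_\infty$-decomposition and the observation that the summands are already indecomposable.

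Concretely, I would first specialize Theorem~\ref{thm:mixed Schur-Weyl infinite} to $q=0$ (equivalently, invoke the first isomorphism in \eqref{eq:Schur-Weyl infinite}), obtaining the $(\gl_\infty,\mathfrak S_d)$-module isomorphism
\begin{equation*}
    \VV^{\o d} \cong \bigoplus_{|\lambda|=d} \glV_{\lambda;0} \o H_\lambda.
\end{equation*}
Since each $H_\lambda$ is a finite-dimensional $\Bbbk$-vector space of dimension $\dim H_\lambda$, I can restrict scalars along $\so_\infty \hookrightarrow \gl_\infty$ and forget the $\mathfrak S_d$-structure, which rewrites the right-hand side as a direct sum of $\dim H_\lambda$ copies of $\glV_{\lambda;0}$ as an $\so_\infty$-module. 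This produces the claimed isomorphism as $\so_\infty$-modules.

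Finally, I would invoke the previous theorem, which asserts that $\glV_{\lambda;0}$ has simple $\so_\infty$-socle $\soV_\lambda$ and is therefore indecomposable as an $\so_\infty$-module. Hence each summand on the right of the displayed formula is already indecomposable, and the decomposition exhibits $\VV^{\o d}$ as a direct sum of indecomposables. There is essentially no obstacle here beyond observing that the indecomposability result applies uniformly to every $\glV_{\lambda;0}$ with $|\lambda|=d$; the corollary is a bookkeeping consequence of the two preceding theorems.
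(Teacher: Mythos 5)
Your proposal is correct and is exactly the argument the paper intends (the corollary is stated without proof, being an immediate consequence of the Schur--Weyl decomposition $\VV^{\o d}\cong\bigoplus_{|\lambda|=d}\glV_{\lambda;0}\o H_\lambda$ restricted to $\so_\infty$ together with the indecomposability of each $\glV_{\lambda;0}$ established in the preceding theorem). The only minor point you might add for completeness is that since $\VV^{\o d}$ has finite length, the Krull--Schmidt theorem guarantees this is \emph{the} decomposition into indecomposables, up to isomorphism and permutation of summands.
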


\bigskip

\noindent {\bf Examples.}
We begin by describing the structure of $\VV \o \VV = \Lambda^2 \VV
\oplus S^2 \VV$ as an $\so_\infty$-module. The exterior square $\Lambda^2 \VV =
\glV_{(1,1);(0)}$ is the irreducible adjoint $\so_\infty$-module, isomorphic to $\soV_{(1,1)}$. For the symmetric square $S^2 \VV =
\glV_{(2);(0)}$ one has the short exact sequence of
$\so_\infty$-modules
\begin{equation*}
    0 \to \soV_{(2)} \to S^2 \VV \overset Q\to \Bbbk \to 0
\end{equation*}
which does not split. Therefore, the structure of $\VV \o \VV$ is
graphically represented as
\begin{equation*}
    \VV\o \VV \quad\sim\quad
    \begin{tabular}{|c|}
        \hline
        $\soV_{(0)}$\\
        \hline
        $\soV_{(2)}$\\
        \hline
    \end{tabular}
    \oplus
    \begin{tabular}{|c|}
        \multicolumn{1}{r}{ }\\
        \hline
        $\soV_{(1,1)}$\\
        \hline
    \end{tabular} \ .
\end{equation*}
\bigskip
Similarly, the structure of the tensor representation of rank 3 is
represented as
\begin{equation*}
    \VV^{\o3} \quad\sim\quad
    \begin{tabular}{|c|}
        \hline
        $\soV_{(1)}$\\
        \hline
        $\soV_{(3)}$\\
        \hline
    \end{tabular}
    \ \oplus 2 \
    \begin{tabular}{|c|}
        \hline
        $\soV_{(1)}$\\
        \hline
        $\soV_{(2,1)}$\\
        \hline
    \end{tabular}
    \ \oplus \
    \begin{tabular}{|c|}
        \multicolumn{1}{c}{ }\\
        \hline
        $\soV_{(1,1,1)}$\\
        \hline
    \end{tabular}\ ,
\end{equation*}
and the structure of the tensor representation of rank 4 as
\begin{equation*}
    \VV^{\o4} \quad\sim\quad
        \begin{tabular}{|c|}
        \hline
        $\soV_{(0)}$\\
        \hline
        $\soV_{(2)}$\\
        \hline
        $\soV_{(4)}$\\
        \hline
    \end{tabular}
    \ \oplus 3 \
    \begin{tabular}{|c|}
        \multicolumn{1}{c}{ }\\
        \hline
        $\soV_{(2)} \oplus \soV_{(1,1)}$\\
        \hline
        $\soV_{(3,1)}$\\
        \hline
    \end{tabular}
    \ \oplus 2 \
    \begin{tabular}{|c|}
        \hline
        $\soV_{(0)}$\\
        \hline
        $\soV_{(2)}$\\
        \hline
        $\soV_{(2,2)}$\\
        \hline
    \end{tabular}
    \ \oplus 3 \
    \begin{tabular}{|c|}
        \multicolumn{1}{c}{ }\\
        \hline
        $\soV_{(1,1)}$\\
        \hline
        $\soV_{(2,1,1)}$\\
        \hline
    \end{tabular}
    \ \oplus \
    \begin{tabular}{|c|}
        \multicolumn{1}{c}{ }\\
        \multicolumn{1}{c}{ }\\
        \hline
        $\soV_{(1,1,1,1)}$\\
        \hline
    \end{tabular}\ .
\end{equation*}

\section{Tensor representations of root-reductive Lie algebras}


It is known that over an algebraically closed field any infinite-dimensional simple
locally finite Lie algebra which admits a root decomposition is classical, i.e. isomorphic to
$\sl_\infty, \sp_\infty$ or $\so_\infty$, see \cite{PS,NS}. Here we discuss a
generalization of our results for $\sl_\infty,\so_\infty$ and $\sp_\infty$ to a more general
class of infinite-dimensional Lie algebras.

Let $\mathfrak k$ be one of the Lie algebras $\sl_\infty, \sp_\infty$ or $\so_\infty$, and let $\h_{\mathfrak k}$ denote its splitting Cartan subalgebra introduced in previous sections. Let $\VV$ denote the natural representation of $\mathfrak k$. Suppose that a Lie algebra $\g$ is the semidirect sum of $\mathfrak k$
and some Lie algebra $\mathfrak m$,
\begin{equation*}
    \g = \mathfrak k \cplus \mathfrak m,
\end{equation*}
and suppose furthermore that $\g$ has a subalgebra $\h$ such that
\begin{equation*}
    \h = \h_{\mathfrak k} \oplus \mathfrak m.
\end{equation*}

\begin{thm}\label{thm:generalization socle}
The socle filtration of the tensor representation of $\VV^{\o(p,q)}$ as a $\g$-module coincides with the socle  filtration of $\VV^{\o(p,q)}$ as a $\mathfrak k$-module.
\end{thm}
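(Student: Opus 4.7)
The plan is to establish both inclusions between $\soc^{(r)}_{\g} \VV^{\o(p,q)}$ and $\soc^{(r)}_{\mathfrak k} \VV^{\o(p,q)}$ by proving that the $\mathfrak k$-socle filtration is already a $\g$-invariant filtration whose layers are semisimple as $\g$-modules. The crucial technical input is to extract, from the hypothesis $\h = \h_{\mathfrak k} \oplus \mathfrak m$, that $\mathfrak m$ centralizes $\h_{\mathfrak k}$ and acts diagonally on $\VV$ in the weight basis $\{\xi_i\}$.

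Concretely, since $\mathfrak k$ is an ideal of $\g$ and $\h$ is a subalgebra with $\mathfrak k \cap \mathfrak m = 0$, one has $[\h_{\mathfrak k}, \mathfrak m] \subset \mathfrak k \cap \h = \h_{\mathfrak k}$. Combining the module identity $H(X\xi_i) = \eps_i(H)\, X\xi_i + [H,X]\,\xi_i$ for $H \in \h_{\mathfrak k}$, $X \in \mathfrak m$ with the linear independence of the $\eps_i$ on $\h_{\mathfrak k}$ and the one-dimensionality of the $\h_{\mathfrak k}$-weight spaces of $\VV$, one deduces that $[H,X] = 0$ for all $H$ and that $X \xi_i = c_i(X)\,\xi_i$ for some scalar $c_i(X)$. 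The parallel conclusion $X \xi_j^* = c_j^*(X)\,\xi_j^*$ holds on $\VV_*$, and the Leibniz rule then makes $\mathfrak m$ act by a single scalar on each $\h_{\mathfrak k}$-weight space of $\VV^{\o(p,q)}$. Because $\soc^{(r)}_{\mathfrak k} \VV^{\o(p,q)}$ is $\h_{\mathfrak k}$-stable and hence a sum of weight spaces, it is automatically $\mathfrak m$-stable and therefore a $\g$-submodule.

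To see that the layer $L^{(r)} = \soc^{(r)}_{\mathfrak k}/\soc^{(r-1)}_{\mathfrak k} \cong \bigoplus_{\lambda,\mu} \glV_{\lambda;\mu} \o H(\lambda,\mu;r)$ is $\g$-semisimple, I would first show that $\mathfrak m$ preserves each $\mathfrak k$-isotypic component. Since $[\h_{\mathfrak k}, \mathfrak m] = 0$ and $\mathfrak k$ is graded by the $\h_{\mathfrak k}$-root decomposition, one has $[\n_{\mathfrak k}, \mathfrak m] \subset \n_{\mathfrak k}$, so for a highest weight vector $n_0 \in \glV_{\lambda;\mu}$ the identity $Y(X n_0) = X(Y n_0) + [Y,X]\,n_0$ with $Y \in \n_{\mathfrak k}$ shows that $X n_0$ remains a highest weight vector of the same weight $\chi_{\lambda,\mu}$. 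Next, since $\mathfrak m$ acts by the scalar $c_{\chi_{\lambda,\mu}}$ on the highest weight subspace $n_0 \o H(\lambda,\mu;r)$, expressing a generic vector in $\glV_{\lambda;\mu} \o h$ as $Y_1 \cdots Y_k (n_0 \o h)$ and iterating $X Y_i = Y_i X + [X,Y_i]$ (with each correction $[X,Y_i] \in \mathfrak k$ acting inside the first tensor factor) shows that each subspace $\glV_{\lambda;\mu} \o h$ with $h$ fixed is $\g$-stable, and $\g$-simple because any proper $\g$-submodule would be a proper $\mathfrak k$-submodule of the $\mathfrak k$-simple $\glV_{\lambda;\mu}$.

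Combining, $\soc^{(r)}_{\mathfrak k}$ is a $\g$-submodule with $\g$-Loewy length $\le r$, giving $\soc^{(r)}_{\mathfrak k} \subset \soc^{(r)}_{\g}$. For the reverse, every $\g$-simple subquotient of $\VV^{\o(p,q)}$ appears as a $\g$-simple constituent of some $L^{(r)}$ and is hence isomorphic to a $\mathfrak k$-simple $\glV_{\lambda;\mu}$; so the $\g$-socle filtration has $\mathfrak k$-semisimple layers, and $\soc^{(r)}_{\g} \subset \soc^{(r)}_{\mathfrak k}$ follows. The main obstacle I anticipate is the third step — passing from the scalar action of $\mathfrak m$ on highest weight subspaces to the $\g$-invariance of each individual copy $\glV_{\lambda;\mu} \o h$ — which requires careful tracking of how the correction terms $[X, Y_i] \in \mathfrak k$ propagate through the generating procedure from the highest weight vector.
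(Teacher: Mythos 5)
Your proof is correct and rests on the same observation as the paper's: that $\mathfrak m$ centralizes $\h_{\mathfrak k}$ and acts by a scalar on each $\h_{\mathfrak k}$-weight space, so that $\h_{\mathfrak k}$-graded subspaces are automatically $\mathfrak m$-stable. Your explicit derivation of $[\h_{\mathfrak k},\mathfrak m]=0$ from the hypothesis that $\h$ is a subalgebra (which the paper states without proof) is a worthwhile addition. However, you work considerably harder than necessary in the semisimplicity step. Once you have recorded, as you do, that $\mathfrak m$ acts by a single scalar on each $\h_{\mathfrak k}$-weight space $\VV^{\o(p,q)}[\chi]$, it follows at once that \emph{every} $\mathfrak k$-submodule $W$ of $\VV^{\o(p,q)}$ (and of any of its subquotients) is a $\g$-submodule: $W$ is $\h_{\mathfrak k}$-stable, hence equals $\bigoplus_\chi \bigl(W\cap\VV^{\o(p,q)}[\chi]\bigr)$, and each summand is trivially preserved by the scalar action of $\mathfrak m$. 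This is exactly the remark the paper makes, and it renders the rest of your argument — isolating the isotypic components, showing that $\mathfrak m$ fixes highest weight vectors, expressing general vectors as $Y_1\cdots Y_k(n_0\o h)$ and propagating the correction terms $[X,Y_i]\in\mathfrak k$ — unnecessary: since the lattices of $\mathfrak k$- and $\g$-submodules coincide, so do the notions of simple submodule and hence the two socle filtrations, with no further work. The obstacle you flagged at the end thus does not arise if the first observation is fully exploited.
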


\begin{proof}
Since $\mathfrak m$ commutes with $\h_{\mathfrak k}$, the action of $\mathfrak m$ preserves the $\h_\mathfrak k$-weight subspaces of any $\mathfrak k$-module. The $\h_\mathfrak k$-weight subspaces of $\VV$ are one-dimensional, hence any $H \in \mathfrak m$ acts in any $\VV[\chi]$ as a scalar. Thus $\VV$ admits a $\h$-weight subspace decomposition, and the same is true for the weight subspaces of tensor representations $\VV^{\o(p,q)}$.

This shows that each $\mathfrak k$-submodule of $\VV^{\o(p,q)}$ is automatically a $\g$-module, and thus the socle filtration of $\VV^{\o(p,q)}$ as a $\mathfrak k$-module is a filtration by $\g$-submodules.
Moreover, the layers of the socle filtration for $\mathfrak k$ remain semisimple as $\g$-modules, and the statement follows.

\end{proof}

Our main application of Theorem \ref{thm:generalization socle} is to the class of
infinite-dimensional root-reductive Lie algebras, studied in
\cite{DP1,PS}. Recall the following structural theorem
from \cite{DP1}.

\begin{thm}\label{thm:structure root reductive}
Let $\g$ be a root reductive Lie algebra. Set $\mathfrak s =
[\g,\g]$ and $\mathfrak a = \mathfrak g/[\g,\g]$. Then
\begin{equation*}
    \mathfrak s = \bigoplus_{i \in \mathcal I} \mathfrak s^{(i)},
\end{equation*}
where each $\mathfrak s^{(i)}$ is isomorphic either to $\sl_\infty, \so_\infty,
\sp_\infty$, or to a simple finite-dimensional Lie algebra, and
$\mathcal I$ is an at most countable index set. Moreover, the short
exact sequence of Lie algebras
\begin{equation}\label{eq:ags exact sequence}
    0 \to \mathfrak s \to \g \to \mathfrak a \to 0
\end{equation}
splits. In other words, $\g \cong \mathfrak s \cplus \mathfrak a$.
\end{thm}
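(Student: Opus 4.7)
The plan is to exploit the defining property of root-reductive Lie algebras: $\g$ can be written as an ascending union $\g = \bigcup_n \g_n$ of finite-dimensional reductive Lie algebras, with each inclusion $\g_n \hookrightarrow \g_{n+1}$ a reductive embedding. Finite-dimensional theory gives $\g_n = [\g_n,\g_n] \oplus \mathfrak{z}(\g_n)$, and the reductivity of the embedding forces $[\g_n,\g_n] \subseteq [\g_{n+1},\g_{n+1}]$. Passing to the limit, $\mathfrak s = [\g,\g] = \bigcup_n [\g_n,\g_n]$ is a locally semisimple Lie algebra, and the quotient $\mathfrak a = \g/\mathfrak s$ is abelian.

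Next I would decompose $\mathfrak s$ into simple ideals. Each $[\g_n,\g_n]$ is a finite direct sum of simple finite-dimensional Lie algebras, and the reductive embedding property permits only a controlled branching of simple summands when passing from $[\g_n,\g_n]$ to $[\g_{n+1},\g_{n+1}]$. Reorganizing the union, one obtains $\mathfrak s = \bigoplus_{i \in \mathcal I} \mathfrak s^{(i)}$ as a direct sum of at most countably many simple ideals. Since $\g$ admits a splitting Cartan subalgebra, so does each $\mathfrak s^{(i)}$; the classification of simple locally finite Lie algebras admitting a root decomposition \cite{Ba,PS,NS} then forces each $\mathfrak s^{(i)}$ to be either a finite-dimensional simple Lie algebra or one of $\sl_\infty, \so_\infty, \sp_\infty$.

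To split the sequence $0 \to \mathfrak s \to \g \to \mathfrak a \to 0$, I would fix a splitting Cartan subalgebra $\h \subseteq \g$ and note that $\h_\mathfrak s := \h \cap \mathfrak s$ is a splitting Cartan of $\mathfrak s$. Every root space $\g_\alpha$ of $\g$ lies in $\mathfrak s$, because $\g_\alpha = \alpha(h)^{-1}[h,\g_\alpha] \subseteq [\g,\g]$ for any $h \in \h$ with $\alpha(h) \neq 0$. Hence $\mathfrak s = \h_\mathfrak s \oplus \bigoplus_\alpha \g_\alpha$, and any vector-space complement $\mathfrak a' \subseteq \h$ of $\h_\mathfrak s$ is automatically an abelian Lie subalgebra of $\g$ satisfying $\g = \mathfrak s \oplus \mathfrak a'$. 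Since $\mathfrak s$ is an ideal, this yields the desired semidirect-sum decomposition $\g \cong \mathfrak s \cplus \mathfrak a'$ with $\mathfrak a' \cong \mathfrak a$.

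The main obstacle is the second step: producing the direct-sum decomposition of $\mathfrak s$ into simple ideals and identifying each one. Organizing $\mathfrak s$ as $\bigoplus_i \mathfrak s^{(i)}$ requires careful bookkeeping of how the simple components of $[\g_n,\g_n]$ embed into those of $[\g_{n+1},\g_{n+1}]$, and in particular ruling out pathological diagonal embeddings that would merge finite simple factors into nonclassical infinite objects; identifying the resulting simple infinite-dimensional ideals then invokes the deep classification results of \cite{Ba,PS,NS}. Once these ingredients are in place, both the global decomposition of $\mathfrak s$ and the splitting argument above are essentially formal.
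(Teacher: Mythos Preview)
The paper does not prove this theorem: it is quoted verbatim from \cite{DP1} (Dimitrov--Penkov), introduced by the phrase ``Recall the following structural theorem from \cite{DP1}.'' So there is no in-paper argument to compare against.

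Your sketch is a plausible outline of how such a proof proceeds, and the splitting step is essentially correct: once you know $\mathfrak s$ contains all root spaces, any vector-space complement to $\h \cap \mathfrak s$ inside the abelian $\h$ gives the desired Lie-algebra splitting. Two remarks on the harder step. First, the inclusion $[\g_n,\g_n] \subseteq [\g_{n+1},\g_{n+1}]$ is automatic for any Lie algebra homomorphism and does not use reductivity. Second, and more substantively, what actually rules out ``pathological diagonal embeddings'' is not generic reductivity but the specific hypothesis that the inclusions $\g_n \hookrightarrow \g_{n+1}$ are \emph{root injections}: each root vector of $\g_n$ is sent to a root vector of $\g_{n+1}$. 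This is the defining feature of root-reductive algebras and is precisely what forces the simple summands of $[\g_n,\g_n]$ to map into single simple summands of $[\g_{n+1},\g_{n+1}]$, so that the limit decomposes as a direct sum of simple ideals each of which is itself a root-injection limit of finite simple Lie algebras. The classification of such limits (yielding $\sl_\infty,\so_\infty,\sp_\infty$ or a finite simple algebra) is then the content of \cite{PS,NS}. Your outline would be strengthened by making this role of root injections explicit.
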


To apply Theorem \ref{thm:generalization socle} to a root-reductive Lie algebra $\g$, we identify $\g$ with a semidirect sum $\g \cong \mathfrak s \cplus \mathfrak a$, pick an infinite-dimensional direct summand $\mathfrak k = \mathfrak s^{(j)}$ of $\mathfrak s$, and set $\mathfrak m = \(\bigoplus_{i \ne j} \mathfrak s^{(i)}\) \cplus \mathfrak a$. The Lie algebra $\mathfrak g$ acts in the natural representation $\VV$ of $\mathfrak s^{(j)}$: $\mathfrak s^{(i)}$ annihilate $\VV$ for $i\ne j$, and $\mathfrak a$ acts by scalars in each $\h_\mathfrak k$-weight subspace.

In contrast with the finite-dimensional reductive Lie algebras,
there exist root-reductive $\g$, such that $\g \ncong \mathfrak s
\oplus \mathfrak a$. For example, for $\g = \gl_\infty$ one has
$\mathfrak s = \sl_\infty$ and $\mathfrak a = \Bbbk$, but
$\gl_\infty \ncong \sl_\infty \oplus \Bbbk$. Another interesting
example is the Lie algebra $\tilde\g$, constructed via the following root
injections
\begin{equation*}
    \gl_n \hookrightarrow \gl_{n+2},
    \qquad\qquad
    A \mapsto
    \begin{pmatrix} \frac{\Tr(A)}n & & \\ & A & \\ & & 0
    \end{pmatrix}.
\end{equation*}
Then $\tilde\g$ is not isomorphic to $\gl_\infty$, although it can
still be included in a short exact sequence of Lie algebras $0 \to
\sl_\infty \to \tilde\g \to \Bbbk \to 0$, see \cite{DPS}. However, Theorem \ref{thm:generalization socle} still applies and describes the socle filtration of the tensor representations of $\tilde\g$.

\section{Appendix}

For completeness, we discuss the details of Weyl's duality approach,
see \cite{W,FH}.

Let $p,q, n$ be nonnegative integers such that $n > p+q$.
Let $V_n = \Bbbk^n$ be the natural representation of the Lie algebra $\gl_n$.
For partitions $\lambda,\mu$ such that $|\lambda| = p$ and $|\mu| = q$
we denote by $\varGamma_{\lambda;\mu}^{(n)}$ the standard irreducible highest weight $\gl_n$-module with highest weight
$\omega = (\lambda_1,\dots,\lambda_p,0,\dots,0,-\mu_q,\dots,-\mu_1)$.

\begin{prop}
For any $n>p+q$ there is an isomorphism \eqref{eq:mixed Schur-Weyl finite}
\begin{equation*}
    (V_n)^{\{p,q\}} \cong \bigoplus_{|\lambda|=p} \bigoplus_{|\mu| = q}
    \Gamma_{\lambda;\mu}^{(n)} \o (H_\lambda \o H_\mu).
\end{equation*}

\end{prop}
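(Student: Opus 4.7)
The plan is to combine ordinary Schur-Weyl duality on each factor with Weyl's harmonic (traceless) decomposition of mixed tensors. First, classical Schur-Weyl duality applied separately to covariant and contravariant factors gives a $(\gl_n,\mathfrak S_p\times\mathfrak S_p)$-module isomorphism
\begin{equation*}
    V_n^{\o p}\o V_n^{*\o q} \ \cong \ \bigoplus_{|\lambda|=p}\bigoplus_{|\mu|=q}
    (\SS_\lambda V_n \o \SS_\mu V_n^*) \o (H_\lambda \o H_\mu),
\end{equation*}
where $\SS_\lambda V_n$ and $\SS_\mu V_n^*$ are the irreducible $\gl_n$-modules with highest weights $(\lambda_1,\dots,0,\dots,0)$ and $(0,\dots,0,-\mu_q,\dots,-\mu_1)$ respectively. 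Next, I would observe that each contraction $\Phi_I$ is $\gl_n$-equivariant, because the pairing $V_n\o V_n^*\to\Bbbk$ is $\gl_n$-invariant; moreover, the family $\{\Phi_I\}$ is permuted by $\mathfrak S_p\times\mathfrak S_q$, so the intersection $(V_n)^{\{p,q\}}$ is a $(\gl_n,\mathfrak S_p\times\mathfrak S_q)$-submodule. Schur's lemma applied to the $\mathfrak S_p\times\mathfrak S_q$-action then gives
\begin{equation*}
    (V_n)^{\{p,q\}} \ \cong \ \bigoplus_{\lambda,\mu}\, \varGamma_{\lambda;\mu}^{(n)} \o (H_\lambda\o H_\mu),
    \qquad
    \varGamma_{\lambda;\mu}^{(n)} \bydef (V_n)^{\{p,q\}} \cap (\SS_\lambda V_n\o\SS_\mu V_n^*),
\end{equation*}
so it remains to identify the $\gl_n$-submodule $\varGamma_{\lambda;\mu}^{(n)}$.

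To establish the lower bound I would exhibit an explicit highest weight vector in $\varGamma_{\lambda;\mu}^{(n)}$ of weight $\chi=(\lambda_1,\dots,\lambda_p,0,\dots,0,-\mu_q,\dots,-\mu_1)$. Using the hypothesis $n>p+q$, the first $p$ basis vectors and the last $q$ dual basis vectors are disjoint, so the product $v_\lambda\o w_\mu$ of the standard $\gl_n$-highest weight vectors in $\SS_\lambda V_n$ (built from $\xi_1,\dots,\xi_p$) and $\SS_\mu V_n^*$ (built from $\xi_n^*,\dots,\xi_{n-q+1}^*$) is killed by every $\Phi_I$, and has weight exactly $\chi$. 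Hence the irreducible $\gl_n$-module with highest weight $\chi$ is contained in $\varGamma_{\lambda;\mu}^{(n)}$.

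The main obstacle is the matching upper bound, i.e., showing there are no other $\gl_n$-constituents in $\varGamma_{\lambda;\mu}^{(n)}$. I would handle this by the standard multiplicity formula for rational representations: for $n\ge p+q$, the decomposition
\begin{equation*}
    \SS_\lambda V_n \o \SS_\mu V_n^* \ \cong \
    \bigoplus_{\gamma,\lambda',\mu'} \LR^{\lambda}_{\lambda',\gamma}\,\LR^{\mu}_{\mu',\gamma}\ L(\lambda';\mu')
\end{equation*}
holds, where $L(\lambda';\mu')$ denotes the irreducible $\gl_n$-module with highest weight assembled from $\lambda'$ and $-\mu'^{\mathrm{rev}}$ as above, and the sum is over partitions with $|\lambda'|=|\lambda|-|\gamma|$, $|\mu'|=|\mu|-|\gamma|$. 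Each summand with $\gamma\ne 0$ lies in the image of an insertion (adjoint of contraction) map $V_n^{\o(p-|\gamma|,q-|\gamma|)}\hookrightarrow V_n^{\o(p,q)}$, and so is detected by an appropriate iterated $\Phi_I$; only the $\gamma=0$ term, appearing with multiplicity one, survives in $\bigcap_I\ker\Phi_I$. This isolates $\varGamma_{\lambda;\mu}^{(n)}$ as precisely the irreducible of highest weight $\chi$, completing the proof.
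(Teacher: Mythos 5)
Your route is genuinely different from the paper's. The paper first shows, via invariant theory, that the idempotents $\theta_I^{(n)} = \Psi_I^{(n)}\Phi_I^{(n)}$ together with $\mathfrak S_p\times\mathfrak S_q$ generate the full commutant of $\gl_n$ in $\End((V_n)^{\o(p,q)})$, then a double-commutant argument restricted to $(V_n)^{\{p,q\}}$ gives a dual pair $(\gl_n,\mathfrak S_p\times\mathfrak S_q)$ there; this yields at once the isotypic decomposition \emph{and} the irreducibility of each $\varGamma_{\lambda;\mu}^{(n)}$ as a $\gl_n$-module, after which identification of the highest weight is a one-line observation. Your proposal bypasses the double commutant and instead uses Schur--Weyl per factor plus the $\mathfrak S_p\times\mathfrak S_q$-isotypic decomposition of the submodule $(V_n)^{\{p,q\}}$ (that step is sound), and your explicit highest weight vector $v_\lambda\o w_\mu$ with disjoint index sets correctly gives $L(\lambda;\mu)\subset\varGamma_{\lambda;\mu}^{(n)}$.

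The gap is in your upper bound. You claim that every $L(\lambda';\mu')$-constituent of $\SS_\lambda V_n\o\SS_\mu V_n^*$ with $\gamma\ne 0$ ``lies in the image of an insertion map'' and is therefore killed off by some iterated $\Phi_I$. This is asserted, not proved, and it is not automatic: a specific irreducible $\gl_n$-submodule $M\cong L(\lambda';\mu')$ of $\SS_\lambda V_n\o\SS_\mu V_n^*$ need not lie inside $\sum_{I}\im\Psi_{I}^{(n)}$. When the multiplicity of $L(\lambda';\mu')$ in $(V_n)^{\o(p,q)}$ exceeds one, a copy sitting inside $\SS_\lambda V_n\o\SS_\mu V_n^*$ can a priori have nonzero projections onto both $(V_n)^{\{p,q\}}$ and the trace part $\sum_I\im\theta_I^{(n)}$, and then nothing you have said prevents such a copy from contributing to $\varGamma_{\lambda;\mu}^{(n)}$. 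What is actually required is that the entire $L(\lambda';\mu')$-isotypic component of $\SS_\lambda V_n\o\SS_\mu V_n^*$ lie in the trace part for $\gamma\ne 0$. To close this you would need either a multiplicity/dimension count showing $\dim(V_n)^{\{p,q\}}=\sum_{\lambda,\mu}\dim L(\lambda;\mu)\,\dim H_\lambda\,\dim H_\mu$, or an independent proof that $\varGamma_{\lambda;\mu}^{(n)}$ is $\gl_n$-irreducible — which is exactly what the paper's commutant argument supplies and your proposal omits.
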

\begin{proof}
For any $I=(i,j)$ with $i\in\{1,2,\dots,p\}$ and
$i\in\{1,2,\dots,q\}$, we define the inclusion
\begin{equation*}
\begin{gathered}
    \Psi_{I}^{(n)}: (V_n)^{\o (p-1,q-1)} \to (V_n)^{\o (p,q)},\\
    v_1 \o\dots\o v_{p-1} \o v^*_1 \o\dots\o v^*_{q-1} \mapsto \frac 1n \sum_{k=1}^n \,  \dots \o
    v_{i-1} \o \zeta_k \o v_{i+1} \o\dots\o v^*_{j-1} \o \zeta_k^* \o v_{j+1}^* \o
    \dots,
\end{gathered}
\end{equation*}
where $\{\zeta_k\}$ and $\{\zeta_k^*\}$ are any dual bases of $V_n$ and $V_n^*$ respectively. Set $\theta_I^{(n)} = \Psi_I^{(n)} \Phi_I^{(n)}$. The operators
$\theta_I^{(n)}$ are idempotent, and $(V_n)^{\{p,q\}} = \bigcap_{I}
\ker \theta_{I}^{(n)}$.

Let $\widetilde{\mathcal A}$ denote the subalgebra of endomorphisms
of $(V_n)^{\o(p,q)}$, generated by the images of elements of $\g_n$,
and let $\widetilde{\mathcal B}$ denote its commutator subalgebra,
i.e. the set of all endomorphisms of $(V_n)^{\o(p,q)}$, commuting
with the action of $\g_n$. From invariant theory it is known that
$\widetilde{\mathcal B}$ is generated by $\theta_I^{(n)}$ for
various $I$, and by permutation maps corresponding to elements from
$\mathfrak S_p \times \mathfrak S_q$. A general result from the
theory of semisimple finite-dimensional algebras implies that,
conversely, $\widetilde{\mathcal A}$ is the commutator subalgebra of
$\widetilde{\mathcal B}$. In other words, any endomorphism of
$(V_n)^{\o(p,q)}$, commuting with all $\theta_I^{(n)}$ and all
permutations from $\mathfrak S_p \times \mathfrak S_q$, must lie in
$\widetilde{\mathcal A}$.

Let $\mathcal A$ denote the subalgebra of endomorphisms of $(V_n)^{
\{p,q\} }$, generated by the images of elements of $\g_n$, and let
$\mathcal B$ denote the subalgebra of endomorphisms of $(V_n)^{
\{p,q\} }$, generated by the permutations from $\mathfrak S_p \times
\mathfrak S_q$. We claim that $\mathcal A$ and $\mathcal B$ are each
other's commutator subalgebras in $\End\( (V_n)^{ \{p,q\} } \)$.
Indeed, suppose that $L$ lies in the commutator subalgebra of
$\mathcal B$. Using the $(\g_n,\mathfrak S_p \times \mathfrak
S_q)$-module isomorphism (see e.g. \cite{FH} for details)
\begin{equation*}
    (V_n)^{\o (p, q)} = (V_n)^{ \{p,q\} } \oplus
    \sum_{I} \im \, \theta_{I}^{(n)},
\end{equation*}
we construct an endomorphism $\widetilde L$ of $(V_n)^{\o (p, q)}$
by extending $L$ trivially on the second direct summand. It is clear
that $\widetilde L$ commutes with all permutations from $\mathfrak
S_p \times \mathfrak S_q$, and also with the operators $\theta_I^{(n)}$, all of which
act on $(V_n)^{ \{p,q\} }$ by zero. Hence $\widetilde L$ belongs to
the commutator subalgebra of $\widetilde{\mathcal B}$, i.e.
$\widetilde L \in \widetilde{\mathcal A}$, and by restriction $L \in
\mathcal A$. Thus $\mathcal A$ is the commutator subalgebra of
$\mathcal B$, and it follows that $\mathcal B$ is also the
commutator subalgebra of $\mathcal A$.

The general theory of dual pairs and the fact that $\{H_\lambda \o
H_\mu\}_{|\lambda|=p, |\mu|=q}$ is a complete list of irreducible
$\mathfrak S_p \times \mathfrak S_q$-modules imply the existence of an isomorphism
\begin{equation*}
    (V_n)^{\{p,q\}} \cong \bigoplus_{|\lambda|=p} \bigoplus_{|\mu| = q}
    \Gamma(\lambda,\mu) \o (H_\lambda \o H_\mu)
\end{equation*}
for some irreducible $\gl_n$-modules $\Gamma(\lambda,\mu)$. To identify these modules explicitly, we note that Schur-Weyl duality yields
\begin{equation*}
    (V_n)^{\o p} \cong \sum_{|\lambda|=p} \varGamma_{\lambda;0}^{(n)} \o H_\lambda,
    \qquad\qquad
    (V_n^*)^{\o q} \cong \sum_{|\mu|=q} \varGamma_{0;\mu}^{(n)} \o H_\mu,
\end{equation*}
and therefore $\Gamma(\lambda,\mu)$ must be a submodule of $\varGamma_{\lambda;0}^{(n)} \o \varGamma_{0;\mu}^{(n)}$. On the other hand, the submodule
$\varGamma_{\lambda;\mu}^{(n)}$ of this tensor product does not occur as a submodule of $(V_n)^{\o(p-1,q-1)}$,
and thus lies in the kernel of all operators $\Phi_I$. We conclude that $\varGamma_{\lambda;\mu}^{(n)} \subset \Gamma(\lambda,n)$, and the irreducibility of
$\Gamma(\lambda,\mu)$ yields the desired statement.
\end{proof}

\bigskip

Finally, to prove the technical statement used in the proof of Theorem \ref{thm:socle series gl}, we need a preparatory lemma.

Define the contractions $\Phi_{I_1,\dots,I_r}: \VV^{\o(p,q)} \to \VV^{\o(p-r,q-r)}$ as the $r$-fold
convolutions between copies of $\VV$ and $\VV_*$ indicated by the
pairwise disjoint collection of index pairs $I_1,\dots,I_r$.
For $r=1,\dots,\ell$, define the inclusions
$\Psi_{I_1,\dots,I_r}^{(n)}: (V_n)^{\o(p-r,q-r)} \to
(V_n)^{\o(p,q)}$, by analogy with $\Phi_{I_1,\dots,I_r}$, as the
$r$-fold insertions of the canonical element of $V_n \o V_n^*$ into
positions specified by disjoint pair of indices $I_1,\dots,I_r$. Set
also
\begin{equation*}
    (V_n)^{ \{p,q\} }_r = \sum_{I_1,\dots,I_r} \im \(
    \Psi_{I_1,\dots,I_r}^{(n)} :(V_n)^{\o(p-r,q-r)} \to
    (V_n)^{\o(p,q)} \).
\end{equation*}
It is a standard exercise to show that for each $n$ one has the
direct sum decomposition
\begin{equation*}
    (V_n)^{\o (p, q)} = (V_n)^{ \{p,q\} }_0 \oplus  (V_n)^{ \{p,q\} }_1 \oplus (V_n)^{ \{p,q\} }_2
    \oplus \dots \oplus (V_n)^{ \{p,q\} }_\ell.
\end{equation*}
For any $I = (i,j)$ we consider the linear map
\begin{equation*}
\begin{gathered}
    \Xi^{(n)}_I: (V_n)^{\o (p,q)} \to (V_n)^{\o (p-1,q-1)},\\
    v_1 \o\dots\o v_p \o v^*_1 \o\dots\o v^*_q \mapsto \
    n\, \<\xi_n^*,v_i\> \<v_j^*,\xi_n\> \ v_1 \o \dots \o \hat v_i \o \dots \o v_p \o v^*_1 \o\dots \o \hat v_j^* \o \dots \o v_q^*.
\end{gathered}
\end{equation*}
\begin{lem}\label{thm:asymptotics lemma}
For any $v \in \VV^{\{p,q\}}$ we have
\begin{equation*}
    \lim_{n\to \infty} \Xi^{(n)}_{J_1} \, \Phi_{J_2,\dots,J_r}^{(n)}
    \Psi_{I_1,\dots,I_r}^{(n)} v =
    \begin{cases}
        v, & \text{if $\{I_1,\dots,I_r\} = \{J_1,\dots,J_r\}$ as
        sets}\\
        0, & \text{ otherwise}
    \end{cases}.
\end{equation*}
\end{lem}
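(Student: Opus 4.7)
The strategy is to expand $\Psi^{(n)}_{I_1,\ldots,I_r}v$ in the dual basis $\{\xi_i\},\{\xi_i^*\}$ and to count the powers of $n$ produced by the subsequent contractions together with the explicit factor $n$ carried by $\Xi^{(n)}_{J_1}$. Writing
\[
\Psi^{(n)}_{I_1,\ldots,I_r}v = \frac{1}{n^r}\sum_{k_1,\ldots,k_r=1}^{n} T_{\mathbf k}(v),
\]
where $T_{\mathbf k}(v)$ is $v$ with $\xi_{k_\beta}$ inserted at the covariant slot $i_\beta$ and $\xi_{k_\beta}^*$ at the contravariant slot $j_\beta$ for each $\beta$, the subsequent application of $\Phi^{(n)}_{J_\alpha}$ or $\Xi^{(n)}_{J_1}$ produces a scalar factor obtained by pairing whichever elements sit at the two endpoints of $J_\alpha$. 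Everything will be controlled by classifying these pairings according to whether each endpoint lies in an inserted slot (one of the $i_\beta$'s or $j_\beta$'s) or in an original slot of $v$.

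Whenever some $\Phi^{(n)}_{J_\alpha}$ has both endpoints in originally-$v$ slots, it acts at positions disjoint from every insertion, commutes past $\Psi^{(n)}_{I_1,\ldots,I_r}$, and reduces to a contraction applied to $v$; this vanishes since $v\in\VV^{\{p,q\}}$. Whenever $\Xi^{(n)}_{J_1}$ has at least one endpoint in an originally-$v$ slot, the corresponding scalar contains a pairing $\langle\xi_n^*,w\rangle$ or $\langle w^*,\xi_n\rangle$ with $w$ or $w^*$ a component of $v$, and since $v$ lies in a fixed finite-dimensional subspace this pairing vanishes for all $n$ exceeding the support of $v$. So only those configurations can contribute in the limit in which $\Xi^{(n)}_{J_1}$ pairs two inserted endpoints and every $\Phi^{(n)}_{J_\alpha}$ pairs at least one.

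In these configurations I count factors of $n$. A matched contraction $J_\alpha=I_\beta$ produces $\langle\xi^*_{k_\beta},\xi_{k_\beta}\rangle=1$ and leaves $k_\beta$ unconstrained, so summing gives a factor $n$; the matched $\Xi^{(n)}_{J_1}=\Xi^{(n)}_{I_\beta}$ pins $k_\beta=n$ but carries its own prefactor $n$, again contributing $n$. A cross-pairing $J_\alpha=(i_\beta,j_{\beta'})$ with $\beta\ne\beta'$ produces $\delta_{k_\beta,k_{\beta'}}$, collapsing two independent sums into one; and an endpoint of $J_\alpha$ falling on $v$ absorbs a $k$-sum via the identity $\sum_{k}\langle w^*,\xi_{k}\rangle\xi_{k}^*=w^*$, valid once $n$ exceeds the support of $w^*$, turning an $O(n)$ trace into an $O(1)$ quantity. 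When $\{I_1,\ldots,I_r\}=\{J_1,\ldots,J_r\}$ every contraction closes a distinct inserted dumbbell, the sums together with the explicit $n$ from $\Xi^{(n)}_{J_1}$ produce $n^r$, the prefactor $1/n^r$ cancels, and an explicit unwinding of the pairings leaves $v$ with no residual insertion. In any other configuration the accumulated $n$-power is strictly less than $n^r$, so the entire expression is bounded by $C(v)/n$ and its limit is $0$.

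The main subtlety will be verifying the uniform $O(1/n)$ bound in configurations that combine several non-matched contractions, for instance a cross-pairing together with additional absorptions into $v$. The count remains clean because each non-matched $J_\alpha$ imposes at least one extra Kronecker constraint on the $k_\beta$'s or forces a $k$-sum into the bounded support of $v$, and different $J_\alpha$'s act on disjoint inserted slots; these constraints are linearly independent, so the total number of free $k$-sums drops by at least one whenever $\{I_1,\ldots,I_r\}\ne\{J_1,\ldots,J_r\}$, producing the required asymptotic suppression.
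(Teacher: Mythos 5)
Your argument is correct in its conclusion but takes a genuinely different route from the paper. The paper proves the lemma by induction on $r$: it isolates the last contraction $J_r=(i,j)$ and distinguishes four cases according to how $J_r$ meets the insertions (matched with some $I_k$; cross-paired to two different insertions; paired to one insertion and one $v$-slot; or disjoint from all insertions), reducing each to the $(r-1)$-case by commutation identities such as $\Phi_I^{(n)}\Psi_I^{(n)}=\Id$ and $\Phi_{(i,j)}^{(n)}\Psi_{(i,a),(b,j)}^{(n)}=\tfrac1n\Psi_{(b,a)}^{(n)}$. You instead expand $\Psi^{(n)}_{I_1,\dots,I_r}v$ in coordinates once and for all and do a global power count in $n$, sorting the pairings made by the $J_\alpha$'s and the final $\Xi^{(n)}_{J_1}$ into matched traces (factor $n$), Kronecker-collapsed sums, and $v$-absorptions (factor $O(1)$). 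Your version avoids the induction and gives a transparent picture of where each power of $n$ comes from; the paper's is somewhat easier to make airtight because it never has to track more than one $J$-contraction at a time.

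One step in your write-up is not correct as stated, although the conclusion you draw from it is. You assert that the Kronecker constraints coming from distinct non-matched $J_\alpha$'s are ``linearly independent'' because the $J_\alpha$'s act on disjoint inserted slots, so each non-matched contraction drops the free-sum count by one. This fails when the cross-pairings form a cycle. For example with $I_1=(1,1)$, $I_2=(2,2)$, $J_1=(1,2)$, $J_2=(2,1)$, the constraint produced by $\Phi^{(n)}_{J_2}$ is $k_1=k_2$, and the constraint produced by $\Xi^{(n)}_{J_1}$ is $k_1=n$ and $k_2=n$, which already implies $k_1=k_2$; the two constraints are not independent, yet the limit is still $0$. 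The correct bookkeeping is via cycle counting: build the graph on $\{1,\dots,r\}$ with an edge joining the two insertion labels hit by each $J_\alpha$ (an edge is a self-loop exactly when $J_\alpha$ is matched). Every vertex has degree two, the graph is a disjoint union of cycles, and the number of free $k$-sums is the number of cycles not containing the $\Xi$-edge; together with the explicit $n$ carried by $\Xi^{(n)}_{J_1}$ one gets total power $n^{c}$ where $c$ is the number of cycles. Since $c\le r$ with equality if and only if all cycles are self-loops, i.e.\ $\{J_1,\dots,J_r\}=\{I_1,\dots,I_r\}$, the bound $O(n^{c-r})$ gives the claimed dichotomy. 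Replacing the linear-independence claim with this cycle count makes your proof complete.
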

\begin{proof}
We use induction on $s$. The base of induction $s=1$ states that
\begin{equation*}
    \lim_{n\to \infty} \Xi^{(n)}_{J} \, \Psi_{I}^{(n)} v =
    \begin{cases}
        v, & \text{if $I = J$}\\
        0, & \text{ otherwise}
    \end{cases},
\end{equation*}
which is clear from the definition. Assume now that $r\ge2$, and let $J_r = (i,j)$.

Case 1. There exists $k$ such that $I_k = (i,j)$; we may assume that $k=r$. Then
\begin{equation*}
    \Xi^{(n)}_{J_1} \, \Phi_{J_2,\dots,J_r}^{(n)}
    \Psi_{I_1,\dots,I_r}^{(n)} v =
    \Xi^{(n)}_{J_1} \, \Phi_{J_2,\dots,J_{r-1}}^{(n)}
    \Psi_{I_1,\dots,I_{r-1}}^{(n)} \Phi_{I_r} \Psi_{J_r} v =
    \Xi^{(n)}_{J_1} \, \Phi_{J_2,\dots,J_{r-1}}^{(n)}
    \Psi_{I_1,\dots,I_{r-1}}^{(n)} v,
\end{equation*}
and the desired statement follows from the induction hypothesis.

Case 2. There exist $k,l$ such that $I_k = (i,a)$ and $I_l = (b,j)$; we may assume that $k=r$ and $l=r-1$. Setting $I' = (b,a)$ and using the identity $\Phi_{(i,j)}^{(n)} \Psi_{(i,a),(b,j)}^{(n)} = \frac 1n \Psi_{b,a}^{(n)}$, we get
\begin{equation*}
    \Xi^{(n)}_{J_1} \, \Phi_{J_2,\dots,J_r}^{(n)}
    \Psi_{I_1,\dots,I_r}^{(n)} v =
    \Xi^{(n)}_{J_1} \, \Phi_{J_2,\dots,J_{r-1}}^{(n)}
    \Psi_{I_1,\dots,I_{r-2}}^{(n)} \Phi_{J_r}^{(n)} \Psi_{I_{r-1},I_r}^{(n)} v =
    \frac 1n \, \Xi^{(n)}_{J_1} \, \Phi_{J_2,\dots,J_{r-1}}^{(n)}
    \Psi_{I_1,\dots,I_{r-2},I'}^{(n)} v.
\end{equation*}
Applying the induction hypothesis, in both cases we obtain
$\lim_{n\to \infty} \Xi^{(n)}_{J_1} \, \Phi_{J_2,\dots,J_r}^{(n)}
    \Psi_{I_1,\dots,I_r}^{(n)} v =0$.

Case 3. There exists $k$ such that $I_k = (i,a)$, but $j$ never occurs in the second position of any $I_l$; we may assume that $k=r$. Using the identity $\Phi_{(i,j)}^{(n)} \Psi_{(i,a)}^{(n)} v = \frac mn \Phi_{(i,j)}^{(m)} \Psi_{(i,a)}^{(m)}v$, we get
\begin{multline*}
    \lim_{n\to \infty} \Xi^{(n)}_{J_1} \, \Phi_{J_2,\dots,J_r}^{(n)}
    \Psi_{I_1,\dots,I_r}^{(n)} v =
    \lim_{n\to \infty} \Xi^{(n)}_{J_1} \, \Phi_{J_2,\dots,J_{r-1}}^{(n)}
    \Psi_{I_1,\dots,I_{r-1}}^{(n)} \Phi_{J_r}^{(n)} \Psi_{I_r}^{(n)} v
    \\
    =  \lim_{n\to \infty} \frac mn \ \Xi^{(n)}_{J_1} \, \Phi_{J_2,\dots,J_{r-1}}^{(n)}
    \Psi_{I_1,\dots,I_{r-1}}^{(n)} \( \Phi_{J_r}^{(m)} \Psi_{I_r}^{(m)} v \).
\end{multline*}
Applying the induction hypothesis to $\Phi_{J_r}^{(m)} \Psi_{I_r}^{(m)} v \in \VV^{\{p,q\}}$, we see that the desired statement holds.

Case 4. The index $i$ never occurs in the first position of any $I_k$, and $j$ never occurs in the second position of any $I_l$. Then for all $n$ we have
\begin{equation*}
    \Xi^{(n)}_{J_1} \, \Phi_{J_2,\dots,J_r}^{(n)}
    \Psi_{I_1,\dots,I_r}^{(n)} v =
    \Xi^{(n)}_{J_1} \, \Phi_{J_2,\dots,J_{r-1}}^{(n)}
    \Psi_{I_1,\dots,I_r}^{(n)} \Phi_{I_r} v = 0.
\end{equation*}

\end{proof}

We are now ready to prove the following assertion, used in the proof of Theorem \ref{thm:socle series gl}. 
\begin{prop}\label{thm:proval}
There exist infinitely many $n$ such that $u - \pi_n(u) \notin F_n^{(s-1)}$.
\end{prop}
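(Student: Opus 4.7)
The strategy is to argue by contradiction using Lemma~\ref{thm:asymptotics lemma}. Suppose $u - \pi_n(u) \in F_n^{(s-1)}$ for all sufficiently large $n$; I will derive a contradiction from this.

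Setup. Using the $\gl_n$-equivariant direct-sum decomposition $F_n^{(s+1)} = \bigoplus_{t=0}^s (V_n)^{\{p,q\}}_t$, write $u = \sum_{t=0}^s u_t^{(n)}$ with $u_t^{(n)} \in (V_n)^{\{p,q\}}_t$. The image of $u$ in $F_n^{(s+1)}/F_n^{(s)}$ generates a copy of $\varGamma_{\lambda;\mu}^{(n)}$; being simple, it must lie entirely in the $(\lambda,\mu)$-isotypic component $\varGamma_{\lambda;\mu}^{(n)} \o H(\lambda,\mu;s)$ of the semisimple quotient. Hence $u_s^{(n)}$ coincides with the lift $\pi_n(u)$ to $(V_n)^{\{p,q\}}_s$, and $u - \pi_n(u) = \sum_{t=0}^{s-1} u_t^{(n)} \in F_n^{(s)}$. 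Under this setup, the condition $u - \pi_n(u) \notin F_n^{(s-1)}$ is equivalent to $u_{s-1}^{(n)} \ne 0$.

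Asymptotic argument. Apply the composite $\Xi_{J_1}^{(n)} \Phi_{J_2,\dots,J_s}^{(n)}$ for an arbitrary pairwise-disjoint $s$-tuple of index pairs. Since $\Phi_{J_2,\dots,J_s}^{(n)}$ is an $(s-1)$-fold $\Phi$-contraction, it annihilates $F_n^{(s-1)}$, so the contradiction hypothesis gives $\Xi_{J_1}^{(n)} \Phi_{J_2,\dots,J_s}^{(n)}(u - \pi_n(u)) = 0$. On the other hand, for $n > m$ the operator $\Xi_{J_1}^{(n)}$ extracts the $\xi_n$-component, which vanishes on $V_m$-supported tensors, so $\Xi_{J_1}^{(n)} \Phi_{J_2,\dots,J_s}^{(n)}(u) = 0$. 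Subtracting, $\Xi_{J_1}^{(n)} \Phi_{J_2,\dots,J_s}^{(n)}(\pi_n(u)) = 0$ for all $n > m$ and all choices of $J_1,\dots,J_s$.

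Closing the contradiction. Expand $\pi_n(u) = \sum_{(I_1,\dots,I_s)} c^{(n)}_{(I_1,\dots,I_s)} \Psi_{I_1,\dots,I_s}^{(n)}(w)$, where $w \in \VV^{\{p-s,q-s\}}$ is a highest weight vector of $\varGamma_{\lambda;\mu}^{(n)}$ that stabilizes to a fixed vector for $n$ large enough. Lemma~\ref{thm:asymptotics lemma} with $r = s$ yields $\lim_n \Xi_{J_1}^{(n)}\Phi_{J_2,\dots,J_s}^{(n)}(\Psi_{I_1,\dots,I_s}^{(n)}(w)) = w \cdot \delta_{\{I\},\{J\}}$, so the vanishing derived above forces $\lim_n c^{(n)}_{(J_1,\dots,J_s)} = 0$ for every tuple. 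However, $\pi_n(u)$ is non-zero (since $u \notin F_n^{(s)}$), and corresponds under the Weyl isomorphism $F_n^{(s+1)}/F_n^{(s)} \cong \bigoplus_{\lambda',\mu'} \varGamma_{\lambda';\mu'}^{(n)} \o H(\lambda',\mu';s)$ to a fixed nonzero element of the $n$-independent multiplicity space $H(\lambda,\mu;s)$; a suitable choice of basis relating $H(\lambda,\mu;s)$ to the $\Psi^{(n)}$-insertions then exhibits some $c^{(n)}_{(J_1,\dots,J_s)}$ that is bounded away from zero, contradicting the asymptotic vanishing.

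The hard part is the last paragraph: rigorously identifying the precise normalization that matches the $\Psi^{(n)}$-expansion of $\pi_n(u)$ with the stable coordinates of the Weyl correspondent in $H(\lambda,\mu;s)$, so that asymptotic vanishing of the coefficients can be converted into a genuine contradiction with the non-vanishing of $\pi_n(u)$. The bookkeeping here relies on the finite-dimensional Weyl construction reviewed earlier in the Appendix, transported to the asymptotic regime governed by Lemma~\ref{thm:asymptotics lemma}.
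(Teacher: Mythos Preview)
Your approach is essentially the paper's: assume the contrary, observe that $\Xi_{J_1}^{(n)}\Phi_{J_2,\dots,J_s}$ annihilates both $u$ (since $u\in(V_m)^{\otimes(p,q)}$ and $n>m$) and $u-\pi_n(u)$ (by the contradiction hypothesis), hence annihilates $\pi_n(u)$; then expand $\pi_n(u)$ through the $\Psi$-insertions and invoke Lemma~\ref{thm:asymptotics lemma} to force $\pi_n(u)=0$, contradicting its role as a generator of a copy of $\varGamma_{\lambda;\mu}^{(n)}$.

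One technical discrepancy worth noting: your expansion $\pi_n(u)=\sum_I c^{(n)}_I\,\Psi^{(n)}_I(w)$ with a \emph{single} fixed vector $w$ and scalar coefficients is not obviously valid, because the $\chi$-weight space of the $\varGamma_{\lambda;\mu}^{(n)}$-isotypic part of $(V_n)^{\otimes(p-s,q-s)}$ has dimension $\dim H_\lambda\cdot\dim H_\mu$, so the preimages under $\Psi_I$ need not be proportional to one $w$. The paper instead writes $\pi_n(u)=\sum_I\Psi_I\,\zeta_I$ with vector-valued $\zeta_I$ and concludes $\zeta_{J_1,\dots,J_s}=0$ directly from the lemma. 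That said, the paper's own treatment of the $n$-dependence of the $\zeta_I$ at this step is no more explicit than yours, so your flag that ``the hard part is the last paragraph'' is well placed; the substance of the two arguments is the same.
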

\begin{proof}
Assume that, on the contrary, $u - \pi_n(u) \in F_n^{(s-1)}$ for all $n\gg m$. Let $J_1,\dots,J_s$ be any collection of pairwise disjoint indices. Since $\Phi_{J_2,\dots,J_r} (u - \pi_n(u)) = 0$, we obtain for all $n$
\begin{equation*}
    \Xi^{(n)}_{J_1} \, \Phi_{J_2,\dots,J_r} \, \pi_n(u) =
    \Xi^{(n)}_{J_1} \, \Phi_{J_2,\dots,J_r} \, u = 0.
\end{equation*}
On the other hand, the vector $\pi_n(u)$ can be represented as
\begin{equation*}
    \pi_n(u) = \sum_{I_1,\dots,I_s} \Psi_{I_1,\dots,I_s} \, \zeta_{I_1,\dots,I_s}
\end{equation*}
for some collection $\{\zeta_{I_1,\dots,I_s} \}$ of vectors from $(V_n)^{\{p,q\}}$, and according to
Lemma \ref{thm:asymptotics lemma}
\begin{equation*}
    \lim_{n\to \infty} \Xi^{(n)}_{J_1} \, \Phi_{J_2,\dots,J_r} \, \pi_n(u) = \zeta_{J_1,\dots,J_n}.
\end{equation*}
It follows that $\zeta_{J_1,\dots,J_n} = 0$, and thus $\pi_n(u) = 0$. This contradicts the assumption that $\pi_n(u)$ generates a submodule of $F_n^{(s+1)}/F_n^{(r)}$ isomorphic to $\varGamma_{\lambda;\mu}^{(n)}$.
\end{proof}

\end{document}